\newcolumntype{P}[1]{>{\centering\arraybackslash}p{#1}}
\newtheorem{theorem} {Theorem}
\newtheorem{lemma} {Lemma}
\newtheorem{definition} {Definition}
\newtheorem{corollary} {Corollary}
\newtheorem{observation} {Observation}
\newtheorem{assumption} {Assumption}
\newtheorem{remark} {Remark}
\def\x{{\mathbf{x}}}
\def\u{{\mathbf{u}}}
\def\v{{\mathbf{v}}}
\def\w{{\mathbf{w}}}
\def\y{{\mathbf{y}}}
\def\A{{\mathbf{A}}}
\def\M{{\mathbf{M}}}
\def\I{{\mathbf{I}}}
\def\V{{\mathbf{V}}}
\def\Z{{\mathbf{Z}}}
\def\W{{\mathbf{W}}}
\def\Q{{\mathbf{Q}}}
\newcommand{\mL}{\mathcal{L}}
\newcommand{\mS}{\mathcal{S}}
\newcommand{\mbS}{\mathbb{S}}
\newcommand{\trace}{\textrm{Tr}}
\newcommand{\reals}{\mathbb{R}}
\title{Frank-Wolfe-based Algorithms \\ for Approximating Tyler's M-estimator}
\author{%
  Lior Danon\\
Technion - Israel Institute of Technology \\
Haifa, Israel 3200003 \\
\texttt{liordanon@campus.technion.ac.il }\\
   \And
  Dan Garber \\
Technion - Israel Institute of Technology \\
Haifa, Israel 3200003 \\
\texttt{dangar@technion.ac.il}\\
}
\begin{document}

\maketitle

\begin{abstract}
Tyler's M-estimator is a well known procedure for robust and heavy-tailed covariance estimation. Tyler himself suggested an iterative fixed-point algorithm  for computing his estimator however, it requires super-linear (in the size of the data) runtime per iteration, which maybe prohibitive in large scale. In this work we propose, to the best of our knowledge, the first Frank-Wolfe-based algorithms for computing Tyler's estimator. One variant uses standard Frank-Wolfe steps, the second also considers \textit{away-steps} (AFW), and the third is a \textit{geodesic} version of AFW (GAFW). AFW provably requires, up to a log factor, only linear time per iteration, while GAFW runs in linear time (up to a log factor) in a large $n$ (number of data-points) regime.  All three variants are shown to provably converge to the optimal solution with sublinear rate, under standard assumptions, despite the fact that the underlying optimization problem is not convex nor smooth. Under an additional fairly mild assumption, that holds with probability 1 when the (normalized) data-points are i.i.d. samples from a continuous distribution supported on the entire unit sphere, AFW and GAFW are proved to converge with linear rates. Importantly, all three variants are  parameter-free and use adaptive step-sizes.
\end{abstract}

\section{Introduction}

Given $n$ data-points in $\reals^p$,  $\x_1,\dots,\x_n\in\reals^p$ different from zero, Tyler's M-estimator (TME), originally proposed by Tyler in his seminal work \cite{Tyler1987}, is a $p\times p$ positive definite matrix $\Q^*$ which satisfies:
\begin{align}\label{eq:TME}
\frac{p}{n}\sum_{i=1}^n\frac{\x_i\x_i^{\top}}{\x_i^{\top}\Q^{*-1}\x_i} = \Q^*.
\end{align}

While the TME is not guaranteed to exist for any set of points $\x_1,\dots,\x_n$, it is well known that under the following assumption proposed in \cite{Tyler1987},  it does.
\begin{assumption}\label{ass:TME}
The data $\{\x_1,\dots,\x_n\}\subset\reals^p$ satisfies that $\x_i \neq 0$ for all $i=1,\dots,n$, and that for any proper subspace $\mathcal{L} \subset \reals^p$, denoting by $N(\mathcal{L})$ the number of data-points lying in $\mathcal{L}$, it holds that
$n > \frac{N(\mL)}{\dim(\mL)}p$.
\end{assumption}

The TME is a well known distribution-free robust covariance estimator. It is also a maximum likelihood estimator for the shape matrix of angular and compound Gaussian distributions, and  often used for estimation of heavy-tailed distributions \cite{wiesel2015structured}. Tyler's M-estimator has received notable interest within the statistics and signal processing communities, see for instance the excellent survey \cite{wiesel2015structured} (including the many important references therein), as well as the recent  works \cite{zhang2014novel, goes2020robust, zhang2016marvcenko, sun2016robust, franks2020rigorous, 7307228, 9286684, 6879458, 9343678} which provide additional analysis results, related estimators, as well as structured versions of Tyler's estimator (e.g., convex constraints, low-rank structure, sparse structure, and more), to name only a few.

It is also well known that when the TME exists, it is given by the optimal solution to the following optimization problem:
\begin{align}\label{eq:TMEprob}
\min_{\Q\in\mbS^p}\{f(\Q) := \frac{p}{n}\sum_{i=1}^n\log(\x_i^{\top}\Q^{-1}\x_i) + \log\det(\Q)\} \quad \textrm{s.t.} \quad \Q \succ 0, \trace(\Q)=p,
\end{align}
where $\mbS^p$ denotes the set of real-valued symmetric $p\times p$ matrices, and $\Q\succ 0$ ($\Q \succeq 0$) denotes that $\Q$ is positive definite (positive semidefinite).

Indeed the connection between \eqref{eq:TME} and \eqref{eq:TMEprob} becomes more apparent when examining the gradient given by 
\begin{align}\label{eq:grad}
\nabla{}f(\Q) = -\frac{p}{n}\sum_{i=1}^n\frac{\Q^{-1}\x_i\x_i^{\top}\Q^{-1}}{\x_i^{\top}\Q^{-1}\x_i} + \Q^{-1}.
\end{align}
Thus, a matrix $\Q \succ 0$ satisfying \eqref{eq:TME}, also satisfies $\Q\nabla{}f(\Q)\Q = 0$, meaning $\nabla{}f(\Q) = 0$, and the other way around. In particular, it can be shown that matrices $\Q\succ 0 $ for which $\nabla{}f(\Q) =0$, are the only stationary points of Problem \eqref{eq:TMEprob} (i.e., do not have descent directions).

\begin{theorem}[see for instance \cite{wiesel2015structured}]
Under Assumption \ref{ass:TME}, Problem \eqref{eq:TMEprob} admits a unique optimal solution $\Q^*\succ 0$. This solution is also the unique solution to  Eq. \eqref{eq:TME} with trace equals $p$.
\end{theorem}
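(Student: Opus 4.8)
The approach is to treat \eqref{eq:TMEprob} as optimization over the cone of positive definite $p\times p$ matrices and exploit two structural facts about $f$: it is invariant under positive scaling, and it is convex along the geodesics $\Q(t)=\Q_0^{1/2}(\Q_0^{-1/2}\Q_1\Q_0^{-1/2})^t\Q_0^{1/2}$ of the affine-invariant metric. Scale invariance is immediate: since $\log\det(c\Q)=p\log c+\log\det\Q$ and $\log(\x_i^\top(c\Q)^{-1}\x_i)=-\log c+\log(\x_i^\top\Q^{-1}\x_i)$, the $\log c$ terms cancel and $f(c\Q)=f(\Q)$ for all $c>0$. Hence the constraint $\trace(\Q)=p$ merely normalizes each ray, and \eqref{eq:TMEprob} is equivalent to minimizing $f$ over the cone modulo scaling, which carries the structure of a Hadamard (complete, simply connected, nonpositively curved) manifold.

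\textbf{Strict geodesic convexity and uniqueness.} Along $\Q(t)$, with $M:=\Q_0^{-1/2}\Q_1\Q_0^{-1/2}=\sum_j\mu_j\v_j\v_j^\top$ and $\y_i:=\Q_0^{-1/2}\x_i$, the term $\log\det\Q(t)$ is affine in $t$ while $\log(\x_i^\top\Q(t)^{-1}\x_i)=\log\sum_j e^{-t\log\mu_j}(\v_j^\top\y_i)^2$ is a log-sum-exp of affine functions, hence convex; so $t\mapsto f(\Q(t))$ is convex, and it is affine on $[0,1]$ only if each summand is, i.e.\ only if every $\y_i$ lies in a single eigenspace of $M$. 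If $M$ is not a multiple of $\I$, pushing its eigenspaces forward by $\Q_0^{1/2}$ yields proper subspaces $\mL_1,\dots,\mL_m$ ($m\ge2$) with $\sum_j\dim\mL_j=p$ and each data point in exactly one $\mL_j$; summing Assumption~\ref{ass:TME} over $j$ gives $n=\sum_jN(\mL_j)<\tfrac pn\sum_j\dim\mL_j=n$, a contradiction. Thus $f$ is strictly geodesically convex modulo scaling. Consequently, between any two feasible minimizers the convex map $t\mapsto f(\Q(t))$ is constant, hence affine, hence they differ by a positive scalar, and the trace constraint forces them equal: the feasible minimizer, once it exists, is unique. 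Likewise, if $\Q\succ0$ is feasible with $\nabla f(\Q)=0$, then along the geodesic to a feasible minimizer $\Q^*$ the convex function has zero derivative at $t=0$, so it is minimized there, giving $f(\Q)\le f(\Q^*)$; with optimality of $\Q^*$ and uniqueness, $\Q=\Q^*$.

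\textbf{Existence via growth at the boundary.} It suffices (on a Hadamard manifold, or directly by sequential compactness at the boundary of the PD cone) to show $f(\gamma(t))\to+\infty$ as $t\to\infty$ along every nonconstant geodesic ray $\gamma(t)=\Q_0^{1/2}e^{tH}\Q_0^{1/2}$, $H$ symmetric and not a multiple of $\I$. Diagonalizing $H$ with distinct eigenvalues $\eta_1<\dots<\eta_m$ of multiplicities $d_1,\dots,d_m$, one gets $\log\det\gamma(t)=t\,\trace H+O(1)$ and, for each $i$, $\log(\x_i^\top\gamma(t)^{-1}\x_i)=-t\,\eta_{r(i)}+O(1)$, where $r(i)$ is the smallest $r$ for which $\x_i$ has a nonzero component in the $r$-th eigenspace of $H$ pushed forward by $\Q_0^{1/2}$; note that $\{i:r(i)\ge s\}$ is exactly the set of data points lying in the pushforward $W_s$ of the span of eigenspaces $s,\dots,m$, which is a proper subspace for every $s\ge2$. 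Summing, $f(\gamma(t))=t\big(\trace H-\tfrac pn\sum_i\eta_{r(i)}\big)+O(1)$, and an Abel summation over the $\eta_r$ rewrites the bracket as $-\sum_{s=2}^m(\eta_s-\eta_{s-1})\big(\tfrac pn N(W_s)-\dim W_s\big)$, in which Assumption~\ref{ass:TME} makes each factor $\tfrac pn N(W_s)-\dim W_s$ negative, so the bracket is strictly positive and $f(\gamma(t))\to+\infty$. Therefore $f$ attains its minimum at some $\Q^*$, which lies in the interior of the cone, so $\Q^*\succ0$.

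\textbf{Fixed-point characterization, and the main obstacle.} Since $\Q^*$ is an interior minimizer subject only to $\trace(\Q)=p$, first-order optimality gives $\nabla f(\Q^*)=\mu\I$ for some multiplier $\mu$; pairing with $\Q^*$ and using $\langle\nabla f(\Q^*),\Q^*\rangle=\frac{d}{dt}\big|_{t=1}f(t\Q^*)=0$ (scale invariance) forces $\mu p=0$, so $\nabla f(\Q^*)=0$. By \eqref{eq:grad}, conjugating $\nabla f(\Q^*)=0$ by $\Q^*$ yields exactly \eqref{eq:TME}, with $\trace(\Q^*)=p$. Conversely, any $\Q\succ0$ with $\trace(\Q)=p$ solving \eqref{eq:TME} has $\nabla f(\Q)=0$ (as already observed above), hence $\Q=\Q^*$, which gives uniqueness of the trace-$p$ solution of \eqref{eq:TME}. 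The routine parts are scale invariance, the convexity computation, and the first-order analysis; the crux is the existence step, where one must understand the behavior of $f$ at the boundary of the PD cone, and it is precisely there — summed over the nested chain of proper subspaces $W_s$ — that Assumption~\ref{ass:TME} is used in full strength. Casting this as coercivity along geodesic rays keeps the eigenvalue/eigenspace bookkeeping clean; a purely sequential argument also works but must additionally control eigenspaces that drift as the degenerating sequence approaches the boundary.
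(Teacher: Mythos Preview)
The paper does not prove this theorem; it is stated with the citation ``see for instance \cite{wiesel2015structured}'' and used as a black box, so there is no in-paper proof to compare against. Your argument --- scale invariance, geodesic convexity of $f$ on the positive-definite cone with the affine-invariant metric, coercivity along rays to force existence, and strict geodesic convexity (modulo scaling) to force uniqueness --- is essentially the standard route taken in the literature you would find behind that citation, and it is correct.

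Two small points. First, a typo in the strict-convexity paragraph: you write $n=\sum_jN(\mL_j)<\tfrac{p}{n}\sum_j\dim\mL_j=n$, but Assumption~\ref{ass:TME} reads $N(\mL)<\tfrac{n}{p}\dim\mL$, so the factor should be $\tfrac{n}{p}$, not $\tfrac{p}{n}$; the conclusion $n<n$ is then correct. Second, in the uniqueness step you pass between ``feasible minimizers'' (trace $p$) and geodesics on the cone, which do not preserve the trace; this is fine because you are implicitly working on the quotient by scaling (equivalently on $\det=1$), but it would be cleaner to say so explicitly before invoking the geodesic between two feasible points.
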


Throughout this paper we assume that Assumption \ref{ass:TME} indeed holds true. Note also that \eqref{eq:TME} is invariant to scaling of each data-point $\x_i, i=1,\dots,n$, and the matrix $\Q^*$. Hence, throughout this paper, as often customary, we assume that $\x_1,\dots,\x_n$ are normalized to have unit-length, i.e., $\Vert{\x_i}\Vert_2=1, i=1,\dots,p$, and $\Q^*$ is normalized to have trace equals $p$, i.e., $\trace(\Q^*)=p$.

The subject of this paper are efficient algorithms for approximating the TME in large-scale, i.e., when both $n,p$ are large.
In \cite{Tyler1987} Tyler proposed a simple algorithm for computing the TME which performs fixed-point iterations, as we define next.
\begin{definition}[Fixed-point iterations for computing the TME]
The FPI method computes the following iterations:
\begin{align}\label{eq:FPI}
\Q_{t+1} \gets \frac{p}{n}\sum_{i=1}^n\frac{\x_i\x_i^{\top}}{\x_i^{\top}\Q_t^{-1}\x_i},
\end{align}
and the returned solution is given by $\Q = p\frac{\Q_T}{\trace(\Q_T)}$, where $\Q_T$ is the last iterate computed. \footnote{There is a variant in which the normalization of the trace is performed after each iteration and was observed to have similar empirical performance  \cite{wiesel2015structured}.}
\end{definition}
Note that each iteration \eqref{eq:FPI} requires $O(p^3 + np^2)$ runtime: $O(p^3)$ time in order to compute the inverse matrix $\Q_t^{-1}$ from the previous iterate $\Q_t$, and additional $O(np^2)$ time to compute the sum of rank-one matrices. In particular, the runtime is super-linear in the size of the input which is $np$, and thus can be prohibitive when $n,p$ are both very large (note that under Assumption \ref{ass:TME} we always have $n > p$).

In \cite{Tyler1987}  Tyler also proved the convergence of the iteration \eqref{eq:FPI} (under Assumption \ref{ass:TME}) but without a rate. Recently, it was proved in \cite{franks2020rigorous} that when the data $\x_1,\dots,\x_n$ are i.i.d. samples from an elliptical distribution,  the FPI method produces a matrix $\widehat{\Q}\succ 0$ with $\trace(\widehat{\Q}) = p$, such that with high probability, $\Vert{\I-\Q^{*1/2}\widehat{\Q}^{-1}\Q^{*1/2}}\Vert_F \leq \epsilon$, after $O(\vert{\log\det\Sigma}\vert + p + \log(1/\epsilon))$ iterations, for any error tolerance $\epsilon$, where $\Q^*$ is the solution to \eqref{eq:TME} with trace equals $p$,  $\Sigma$ is the shape matrix of the underlying elliptical distribution, and $\Vert{\cdot}\Vert_F$ denotes the Frobenius (Euclidean) norm. The proof of this result is highly involved and relies on deep mathematical concepts such as strong geodesic convexity and quantum expanders. 
Note that taking into account the runtime of a single iteration \eqref{eq:FPI} and the bound on number of iterations, the total runtime to reach $\epsilon$ error (according to the measure of convergence in \cite{franks2020rigorous}) is  $\Omega(p^4 + np^3)$. 

The goal of this paper is to present new simple and efficient first-order methods for solving Problem \eqref{eq:TMEprob} which avoid the super-linear runtime barrier of $np^2$, and even the matrix inversion barrier ($O(p^3)$ in practical implementations), required by each iteration of \eqref{eq:FPI}. Our algorithms are based on the well known Frank-Wolfe method (aka conditional gradient method) for constrained smooth minimization \cite{FrankWolfe, jaggi2013revisiting}. To the best of our knowledge this is the first time that the Frank-Wolfe method has been considered for computing the TME. We provide three such variants with the following properties:
\begin{itemize}
\item
All three variants provably converge to the optimal solution  (the one satisfying Eq. \eqref{eq:TME}) under Assumption \ref{ass:TME} with a sublinear rate --- $O(1/\epsilon^2)$ iterations for approximation error $\epsilon$.
\item
Two variants, AFW and GAFW, are also proved to converge with a linear rate (i.e,. $\log(1/\epsilon)$ dependence on the error tolerance $\epsilon$), under an additional fairly mild assumption that holds with probability 1 when the (normalized) data-points are i.i.d. samples from a continuous distribution supported on the entire unit sphere (note that the $O(\log(1/\epsilon))$ rate proved in \cite{franks2020rigorous} also holds under an assumption that the data is sampled i.i.d. from an elliptical distribution). 

\item
All three variants are \textit{parameter-free} and do not require any tuning of parameters, and apply adaptive step-sizes (i.e., not fixed beforehand).

\item
The AFW variant  requires only $\tilde{O}(np)$ runtime per iteration which is linear in the size of the data, up to a single logarithmic factor, while another variant GAFW  runs in the same time when $n\geq p^2$. This is in contrast to FPI which requires $O(np^2+p^3)$ time per iteration. In fact, even if processing the data could be ideally distributed / done in parallel, FPI still requires an expensive matrix inversion on each iteration (which practically runs in $O(p^3)$ time), while our AFW variant only requires additional $O(p^2)$ runtime per iteration.
\end{itemize}

A quick summary of our results is brought in Table \ref{table:res}.

Importantly and quite pleasingly, our derivations and proofs rely mostly on standard arguments for continuous optimization in Euclidean spaces, and we believe that as such, are quite accessible.

Beyond our contribution to efficient algorithms for computing the TME, our work also contributes more broadly to the theory of Frank-Wolfe-type methods, which have received significant interest in recent years in the context of obtaining faster rates for convex and smooth problems (e.g.,  \cite{garber2016linearly, lacoste2015global, garber2015faster, garber2016faster, allen2017linear}), dealing with nonsmooth objectives (e.g., \cite{thekumparampil2020projection, dvurechensky2020self, carderera2021simple}), and dealing with non-convex objectives (e.g., \cite{lacoste2016convergence}). While Problem \eqref{eq:TMEprob} is nonsmooth and nonconvex, our Frank-Wolfe variants, with our tailored-designed adaptive step-sizes, are guaranteed to converge to the global minimum, and this is the first result of its kind for Frank-Wolfe-based methods that we are aware of. Moreover, while to the best of our knowledge, so called \textit{away-steps} in Frank-Wolfe algorithms were only shown to facilitate linear convergence rates when optimizing over polytopes \cite{lacoste2015global}, here we establish that also for the feasible set of positive semidefinite matrices in Problem \eqref{eq:TMEprob}, such away-steps can lead to linear convergence rates (under proper assumptions), as we prove for our AFW and GAFW variants. An additional contribution is our connection between Frank-Wolfe methods and optimization over manifolds as captured by our GAFW variant which applies Frank-Wolfe steps and away-steps w.r.t. to the \textit{geodesic gradient} of Problem \eqref{eq:TMEprob}. It is thus our hope that our work will lead to additional efficient Frank-Wolfe-based algorithms for other important and well-structured non-convex and potentially nonsmooth problems.

We introduce the following notation. We let $\mbS^p_+$ and $\mbS^p_{++}$ denote the sets of positive semidefinite and positive definite matrices in $\mbS^p$, respectively. We denote $\mS_p = \{\Q\in\mbS^p_+ ~|~\trace(\Q) =p\}$, $\mS_{p+} = \{\Q\in\mbS^p_{++} ~|~\trace(\Q) =p\}$. For a matrix $\M\in\mbS^p$, we let $\lambda_i(\M)$ denote the $i$th largest signed eigenvalue of $\M$. We also denote at times by $\lambda_{\max}(\M)$ and $\lambda_{\min}(\M)$ the largest and smallest (signed) eigenvalues, respectively. For real matrices we let $\Vert{\cdot}\Vert_2$, $\Vert{\cdot}\Vert_F$, $\Vert{\cdot}\Vert_1$ denote the spectral norm (largest singular value), Frobenius (Euclidean) norm, and trace norm (sum of singular values), respectively. We also denote by $\langle{\cdot,\cdot}\rangle$ the standard inner-product in $\mbS^p$.

It is known that for every $\Q\in\mS_{p+}$, $f(\Q)$ is finite, while for every sequence $(\Q_k)_{k\geq 1}\subseteq\mS_{p+}$ with $\lambda_{\min}(\Q_k)\underset{k\rightarrow\infty}{\longrightarrow} 0$, it holds that $f(\Q_k)\underset{k\rightarrow\infty}{\longrightarrow} \infty$, see \cite{wiesel2015structured}. This implies the following lemma that will be used throughout the paper.
\begin{lemma}\label{lem:lowEig}
Fix $\Q_0 \in\mS_{p+}$. There exists a constant $\lambda >0$ such that for every $\Q\in\mS_{p+}$ satisfying $f(\Q) \leq f(\Q_0)$, it holds that $\lambda_{\min}(\Q) \geq \lambda$.
\end{lemma}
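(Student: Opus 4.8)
The plan is a standard compactness-type argument by contradiction, relying entirely on the two facts recalled just before the lemma: that $f$ is finite-valued on $\mS_{p+}$, and that $f(\Q_k)\to\infty$ along any sequence $(\Q_k)\subseteq\mS_{p+}$ whose smallest eigenvalue tends to $0$.

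First I would set
\[
\lambda := \inf\{\lambda_{\min}(\Q) : \Q\in\mS_{p+},\ f(\Q)\le f(\Q_0)\}.
\]
This infimum is taken over a nonempty set (it contains $\Q_0$ itself) and is nonnegative, since every $\Q\in\mS_{p+}$ satisfies $\lambda_{\min}(\Q)>0$. Hence it suffices to show $\lambda>0$: once we know this, the definition of the infimum immediately gives that every $\Q\in\mS_{p+}$ with $f(\Q)\le f(\Q_0)$ satisfies $\lambda_{\min}(\Q)\ge\lambda$, which is precisely the assertion of the lemma.

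To show $\lambda>0$, suppose for contradiction that $\lambda=0$. Then we can choose a sequence $(\Q_k)_{k\ge1}\subseteq\mS_{p+}$ with $f(\Q_k)\le f(\Q_0)$ for all $k$ and $\lambda_{\min}(\Q_k)\xrightarrow[k\to\infty]{}0$ --- concretely, for each $k$ pick $\Q_k$ in the feasible set $\{\Q\in\mS_{p+}:f(\Q)\le f(\Q_0)\}$ with $\lambda_{\min}(\Q_k)<1/k$, which is possible exactly because the infimum defining $\lambda$ equals $0$. By the property of $f$ recalled above, this forces $f(\Q_k)\to\infty$. On the other hand $f(\Q_k)\le f(\Q_0)<\infty$ for every $k$ (finiteness of $f(\Q_0)$ holds because $\Q_0\in\mS_{p+}$), a contradiction. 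Therefore $\lambda>0$, and the proof is complete.

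There is essentially no genuine obstacle in this argument; the only point worth verifying carefully is that the divergence statement $f(\Q_k)\to\infty$ is asserted for \emph{every} sequence in $\mS_{p+}$ with vanishing smallest eigenvalue, so it applies verbatim to the sequence extracted above. One could alternatively run the argument through sequential compactness of $\mS_p$ and lower semicontinuity of $f$ toward the boundary of $\mbS^p_+$, but the direct contradiction above is shorter and fully self-contained given the cited facts.
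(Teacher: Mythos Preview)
Your proof is correct. The paper does not actually supply a proof of this lemma: it merely states the two facts you invoke (finiteness of $f$ on $\mS_{p+}$, and $f(\Q_k)\to\infty$ whenever $\lambda_{\min}(\Q_k)\to 0$) and asserts that the lemma follows; your contradiction argument via the infimum is exactly the natural way to make that implication rigorous.
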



\begin{table*}\renewcommand{\arraystretch}{1}
{\footnotesize
\begin{center}
\caption{Summary of main results. The bounds are given in simplified form excluding constants and logarithmic factors. We denote by $\kappa_0, \lambda_0, \tilde{\kappa}_0$ the maximal values of $\frac{\lambda_{\max}(\Q)}{\lambda_{\min}(\Q)}$, $\lambda_{\max}(\Q)$,  $\frac{\lambda^2_{\max}(\Q)}{\lambda_{\min}(\Q)}$ over the level set $\{\Q\in\mS_{p} ~|~f(\Q) \leq f(\Q_0)\}$, where $\Q_0$ is the initialization point, respectively. The $\nabla{}f$ notation denotes the gradient at the current point. $\rho$ is the constant of linear convergence (see Theorem \ref{thm:linear}). The linear rates are w.r.t. the approximation error in function value. The sublinear rates of AFW and GAFW are w.r.t. the distance in spectral norm from satisfying Eq. \eqref{eq:TME}, while the sublinear rate for FW is for a related, yet slightly different measure, see Theorem   \ref{thm:sublin}.}\label{table:res}
\begin{tabular}{ | P{7em} | P{14em} | P{6em}| P{7em}  |} 
  \hline
  FW variant &  single iteration runtime ~~~~~~~~~~~~~~~(Theorem \ref{thm:fastEig}) &  sublinear rate (Theorem \ref{thm:sublin}) & linear rate (Theorem \ref{thm:linear}) \\
  \hline
  FW & $(p^2 + np)\sqrt{\Vert{\nabla{}f}\Vert_2\vert{\lambda_{\min}(\nabla{}f)}\vert^{-1}}$ & $\tilde{\kappa}_0^2/\epsilon^2$ & - \\ \hline
  AFW & $p^2 + np$ & $\tilde{\kappa}_0^2/\epsilon^2$ & $\kappa_0^2\rho^{-1}\log(1/\epsilon)$ \\ \hline
  GAFW & $p^3 + np$ & $\lambda_0^2/\epsilon^2$ & $\rho^{-1}\log(1/\epsilon)$ \\ \hline  
\end{tabular}

\end{center}}
\vskip -0.2in
\end{table*}\renewcommand{\arraystretch}{1}

\section{Frank-Wolfe-based Algorithms for Approximating Tyler's M-estimator}\label{sec:alg}
We consider three Frank-Wolfe variants for Problem \eqref{eq:TMEprob}, all described below in Algorithm \ref{alg:cap}. All three variants require to solve (approximately) a certain eigenvalue problem, a different one for each variant, see more details below. All three variants have the following two properties: 1. they all use the same adaptive scheme to compute the step-size $\mu_t$, which in particular does not require any knowledge of the parameters of the problem (this choice will become clearer in the convergence analysis), and 2. since the gradient of the objective $f(\cdot)$ requires the inverse matrix $\Q_t^{-1}$, where $\Q_t$ is the current iterate of the algorithm, they all explicitly and efficiently maintain the inverse $\Q_t^{-1}$ using the well-known Sherman-Morrison formula and capitalizing on the fact that the algorithm performs a rank-one update. Hence, the inverse could be updated in only $O(p^2)$ time per iteration (as opposed to the standard $O(p^3)$ matrix inversion time). Below we expand on each of the eigenvalue problems in Algorithm \ref{alg:cap} and how they correspond to familiar and new Frank-Wolfe-type methods.

We note that Algorithm \ref{alg:cap} is completely independent of any parameter of Problem \eqref{eq:TMEprob}. The only parameter the algorithm accepts is the approximation parameter $\beta\in[0,1)$, however this choice is not very important as will be evident from our convergence theorems, and only brought for convinience. In particular one can always simply choose some universal constant, for instance $\beta = 1/2$.  The algorithm also requires a feasible initialization point $\Q_0\in\mS_{p+}$ and its inverse, and a convenient choice is to simply take $\Q_0 = \I = \Q_0^{-1}$.

\begin{algorithm}
\caption{Frank-Wolfe variants for approximating Tyler's M-estimator}\label{alg:cap}
\begin{algorithmic}
\State Input: $\Q_0, \Q_0^{-1}$ such that  $\Q_0\succ 0$, $\beta\in[0,1)$
\For{t = 0, , ...}
\State Solve an approximate eigenvalue problem: let $\v_t\in\reals^p$ be such that $\Vert{\v_t}\Vert = \sqrt{p}$ and satisfies one of the following:
\begin{alignat}{2}
&1.~~ -\v_t^{\top}\nabla{}f(\Q_t)\v_t \geq -p(1-\beta)\lambda_{\min}(\nabla{}f(\Q_t)) \qquad &&\textrm{(FW step)} \label{eq:FWstep}\\
&2. ~~\vert{\v_t^{\top}\nabla{}f(\Q_t)\v_t}\vert \geq p(1-\beta)\Vert{\nabla{}f(\Q_t)}\Vert_2 \qquad &&\textrm{(AFW step)} \label{eq:AFWstep} \\
&3. ~~\frac{\vert{\v_t^{\top}\nabla{}f(\Q_t)\v_t}\vert}{\v_t^{\top}\Q_t^{-1}\v_t} \geq (1-\beta)\Vert{\Q_t^{1/2}\nabla{}f(\Q_t)}\Q_t^{1/2}\Vert_2 \qquad &&\textrm{(GAFW step)}  \label{eq:GAFWstep}
\end{alignat}
\State $\mu_t \gets \frac{ - {\v_t}^{\top} \nabla f(\Q_t)\v_t}{(\v_t^{\top}\Q_t^{-1}\v_t)^2  - {\v_t}^{\top} \nabla f(\Q_t){\v_t}}$ 
\State $\gamma_t \gets \frac{\mu_t}{1 - \mu_t} = \frac{-\v_t^{\top}\nabla{}f(\Q_t)\v_t}{(\v_t^{\top}\Q_t^{-1}\v_t)^2}$
\State $\Q_{t+1} \gets \Q_t + \mu_t (\v_t {\v_t}^{\top} - \Q_t)$
\State $\Q_{t+1}^{-1} \gets \frac{1}{1-\mu_t} (\Q_t^{-1} - \gamma_t\frac{\Q_t^{-1} \v_t \v_t^{\top} \Q_t^{-1}}{1 + \gamma_t \v_t^{\top} \Q_t^{-1}\v_t})$
\EndFor
\end{algorithmic}
\end{algorithm}

\paragraph*{Standard Frank-Wolfe update:} For the feasible set $\mS_p$, it is well known that the solution to the FW linear optimization problem $\min_{\V\in\mS_p}\langle{\V,\nabla{}f(\Q_t)}\rangle$ at the current point $\Q_t$ is given w.l.o.g. as $\V_+ := \v\v^{\top}$, where $\v = \sqrt{p}\u$ and $\u$ is  a unit-length eigenvector of $\nabla{}f(\Q_t)$ corresponding to the smallest eigenvalue, i.e., $\u^{\top}\nabla{}f(\Q)\u = \lambda_{\min}(\nabla{}f(\Q_t))$, see for instance \cite{hazan2008sparse, jaggi2013revisiting}. This is exactly the step in Eq. \eqref{eq:FWstep} in Algorithm \ref{alg:cap}, only that in  Eq. \eqref{eq:FWstep} we do not require exact computation of the eigenvector, but allow for a $(1-\beta)$ multiplicative approximation w.r.t. the smallest eigenvalue.

\paragraph*{Frank-Wolfe with Away-Steps (AFW) update:}
The AFW update (see \cite{GueLat1986, lacoste2015global}) applies one of two types of updates. One is the standard FW step discussed above. The other, called \textit{away-step},  takes the form $\Q_{t+1} \gets \Q_t + \mu(\Q_t-\V_-)$, $\mu >0$, where $\V_-\in\mS_p$ is ideally chosen so that $\V_-$ maximizes the inner product $\langle{\V_-,\nabla{}f(\Q_t)}\rangle$ over $\mS_p$,  subject to the constraint that there indeed exists a corresponding $\mu >0$, so that $\Q_{t+1}$ is feasible \cite{GueLat1986, lacoste2015global}. For $\Q_t \succ 0$, all points in $\mS_p$ give rise to such positive $\mu$, and so, $\V_-$ is simply the solution to $\max_{\V\in\mS_p}\langle{\V,\nabla{}f(\Q_t)}\rangle$, which w.l.o.g. is of the form $\V_ -= \v\v^{\top}$, where $\v = \sqrt{p}\u$ and $\u$ is a unit-length eigenvector of $\nabla{}f(\Q_t)$ corresponding to the largest (signed) eigenvalue. 

If $\langle{\Q_t-\V_+, \nabla{}f(\Q_t)}\rangle \geq \langle{\V_ - - \Q_t, \nabla{}f(\Q_t)}\rangle$, AFW performs a standard FW update with $\V_+$, and otherwise, it performs an away-step with $\V_-$. In case of Problem \eqref{eq:TMEprob}, a straightforward calculation shows that $\langle{\Q_t,\nabla{}f(\Q_t)}\rangle = 0$. Thus, in our case, a FW step is taken if $-\langle{\V_+, \nabla{}f(\Q_t)}\rangle \geq \langle{\V_-, \nabla{}f(\Q_t)}\rangle$, i.e., if $-p\lambda_{\min}(\nabla{}f(\Q_t)) \geq p\lambda_{\max}(\nabla{}f(\Q_t))$, and otherwise an away-step is taken. Thus, taking both cases into account, AFW performs a step of the form $\Q_{t+1}\gets \Q_t + \mu(\v\v^{\top} - \Q_t)$ (here $\mu$ can also be negative), where $\vert{\v^{\top}\nabla{}f(\Q_t)\v}\vert \geq p\max\{-\lambda_{\min}(\nabla{}f(\Q_t)), \lambda_{\max}(\nabla{}f(\Q_t))\} = p\Vert{\nabla{}f(\Q_t)}\Vert_2$, which is exactly what we have in Eq. \eqref{eq:AFWstep} in Algorithm \ref{alg:cap}, only that in  \eqref{eq:AFWstep} we again do not require precise computation, but allow for a $(1-\beta)$ multiplicative approximation.

\paragraph*{Geodesic Frank-Wolfe with Away-Steps (GAFW) update:} GAFW performs the same updates as AFW, but not w.r.t. to the (standard) gradient $\nabla{}f(\Q_t)$, but with respect to the so-called \textit{geodesic gradient} given by $\Q_t^{1/2}\nabla{}f(\Q_t)\Q_t^{1/2}$ (see for instance \cite{franks2020rigorous}). Note that $\Vert{\Q_t^{1/2}\nabla{}f(\Q_t)\Q_t^{1/2}}\Vert_2 = \max_{\u: \Vert{\u}\Vert_2=1}\vert{\u^{\top}\Q_t^{1/2}\nabla{}f(\Q_t)\Q_t^{1/2}\u}\vert_2$. Consider the parametrization $\u = \frac{\Q_t^{-1/2}\v}{\Vert{\Q_t^{-1/2}{\v}}\Vert_2}$ (note it is invariant to the norm of $\v$). This gives $\Vert{\Q_t^{1/2}\nabla{}f(\Q_t)\Q_t^{1/2}}\Vert_2 = \max_{\v}\frac{\vert{\v^{\top}\nabla{}f(\Q_t)\v}\vert}{\v^{\top}\Q_t^{-1}\v}$, which is exactly the update step in Eq. \eqref{eq:GAFWstep} in Algorithm \ref{alg:cap}, only that in Eq. \eqref{eq:GAFWstep} it suffices to find a $(1-\beta)$-multiplicative approximation. As will be evident from our analysis, and in particular in the error reduction argument in Lemma \ref{lem:errorReduce}, this step maximizes the decrease in function value on each iteration.

The following lemma establishes that the step-sizes of Algorithm \ref{alg:cap} indeed always produce feasible iterates in $\mS_{p+}$. The proof is given in the appendix.
\begin{lemma}[Feasibility of Algorithm \ref{alg:cap}]\label{lem:feas}
Suppose that $\trace(\Q_0) = p$ and $\Q_0 \succ 0$. Then, 
for every iteration $t\geq 1$ of Algorithm \ref{alg:cap} it holds that $\trace(\Q_t) = p$ and $\Q_t \succ 0$.
\end{lemma}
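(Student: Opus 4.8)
The plan is to prove the two assertions --- $\trace(\Q_{t+1}) = p$ and $\Q_{t+1}\succ 0$ --- together by induction on $t$, the base case $t=0$ being exactly the hypothesis. (Note first that $\v_t$ always exists: scaling an exact eigenvector of $\nabla f(\Q_t)$ to Euclidean norm $\sqrt p$ satisfies each of \eqref{eq:FWstep}--\eqref{eq:GAFWstep} since $\beta<1$.) For the inductive step, assume $\Q_t\succ 0$ and $\trace(\Q_t)=p$, and abbreviate $s:=\v_t^{\top}\Q_t^{-1}\v_t>0$ and $a:=\v_t^{\top}\nabla f(\Q_t)\v_t$. With this notation $\mu_t=-a/(s^2-a)$, $\gamma_t=\mu_t/(1-\mu_t)=-a/s^2$, $1-\mu_t=s^2/(s^2-a)$, and the update line of Algorithm \ref{alg:cap} rewrites as $\Q_{t+1}=(1-\mu_t)(\Q_t+\gamma_t\,\v_t\v_t^{\top})$.

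Everything hinges on two inequalities: $s\ge 1$ and $a<s$. The first is a direct consequence of the normalization: since $\Q_t\succ 0$ has eigenvalues summing to $\trace(\Q_t)=p$, we get $\lambda_{\max}(\Q_t)\le p$, hence $\lambda_{\min}(\Q_t^{-1})\ge 1/p$ and $s\ge\lambda_{\min}(\Q_t^{-1})\Vert\v_t\Vert_2^2=1$, using $\Vert\v_t\Vert_2^2=p$. The second uses the explicit gradient \eqref{eq:grad}: expanding, $a=s-\frac{p}{n}\sum_{i=1}^n (\v_t^{\top}\Q_t^{-1}\x_i)^2/(\x_i^{\top}\Q_t^{-1}\x_i)\le s$, and equality would force $\v_t^{\top}\Q_t^{-1}\x_i=0$ for every $i$; but $\Q_t^{-1}\v_t\ne 0$ (as $\v_t\ne 0$ and $\Q_t^{-1}\succ 0$) and Assumption \ref{ass:TME} implies $\x_1,\dots,\x_n$ span $\reals^p$, so those inner products cannot all vanish --- hence $a<s$ strictly. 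Together these give $s^2-a>0$ in all cases ($s^2\ge s>a$ if $a>0$, and $s^2>0\ge a$ if $a\le 0$), so $\mu_t,\gamma_t$ are well defined and $1-\mu_t=s^2/(s^2-a)>0$.

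Given $\Q_{t+1}=(1-\mu_t)(\Q_t+\gamma_t\v_t\v_t^{\top})$ with $1-\mu_t>0$, the trace claim is immediate from the original form of the update: $\trace(\Q_{t+1})=(1-\mu_t)\trace(\Q_t)+\mu_t\Vert\v_t\Vert_2^2=(1-\mu_t)p+\mu_t p=p$. For positive definiteness, if $\gamma_t\ge 0$ then $\Q_t+\gamma_t\v_t\v_t^{\top}\succeq\Q_t\succ 0$; if $\gamma_t<0$ (equivalently $a>0$), set $c:=|\gamma_t|=a/s^2$ and invoke the elementary rank-one downdate criterion --- for $\Q\succ 0$ and $c>0$, $\Q-c\v\v^{\top}\succ 0$ iff $c\,\v^{\top}\Q^{-1}\v<1$ (conjugate by $\Q^{-1/2}$) --- which here reads $c\,s=a/s<1$, i.e.\ precisely $a<s$. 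In either case $\Q_t+\gamma_t\v_t\v_t^{\top}\succ 0$, hence $\Q_{t+1}\succ 0$, completing the induction.

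The only real obstacle is the strict inequality $a<s$: it is what guarantees that $\mu_t$ never reaches $1$ and that an away-step never overshoots the boundary of $\mS_{p+}$, and it is exactly where both the normalizations (yielding $s\ge 1$) and the spanning property guaranteed by Assumption \ref{ass:TME} (yielding strictness) are used. Everything else is bookkeeping with the closed forms of $\mu_t,\gamma_t$, and notably the whole argument is uniform over the three step types: it only uses $\v_t\ne 0$ with $\Vert\v_t\Vert_2=\sqrt p$, never which of \eqref{eq:FWstep}--\eqref{eq:GAFWstep} produced $\v_t$.
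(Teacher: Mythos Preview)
Your proof is correct and follows essentially the same approach as the paper's: both arguments use the explicit gradient \eqref{eq:grad} together with the spanning property from Assumption~\ref{ass:TME} to obtain the key strict inequality (your $a<s$, equivalently the paper's $1+\gamma_t\v_t^{\top}\Q_t^{-1}\v_t>0$), and both reduce positive definiteness to a rank-one downdate criterion via conjugation by $\Q_t^{-1/2}$. Your organization is slightly tidier---you isolate the single inequality $a<s$ and derive both well-definedness of $\mu_t$ and the PD criterion from it, whereas the paper proves these two consequences in separate places (Observation~\ref{observ:aux} uses only $a\le s$ and the strict $s>1$, while the PD argument re-derives $a<s$ from the sum expression)---but the mathematical content is the same.
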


\subsection{Efficient implementation of the eigenvalue oracles in Algorithm \ref{alg:cap}}\label{sec:implement}
We now discuss how the various eigenvalue problems in Algorithm \ref{alg:cap} could be solved (to sufficient approximation) efficiently via simple and well-known iterative algorithms for leading eigenvector computation, such as the \textit{power method} or the \textit{Lanczos algorithm} \cite{golub2013matrix, saad2011numerical}. These implementations rely on three simple ideas:
\begin{enumerate}
\item
Use of the Sherman-Morrison formula for rank-one updates to update the inverse matrix $\Q_{t+1}^{-1}$ from the previous one $\Q_t^{-1}$, given the vector $\v_t$, in $O(p^2)$ time.
\item
Explicitly maintaining and updating the vectors $\Q_t^{-1}\x_i, i=1,\dots,n$, which can be done in overall $O(np)$ time per iteration. This allows to compute a matrix-vector product of the form $\nabla{}f(\Q_t)\v$, for some $\v\in\reals^p$, in only $O(np)$ time (see expression for $\nabla{}f$ in Eq. \eqref{eq:grad}). 
\item
Apply the power method or the Lanczos algorithm (or any other method for leading eigenvector computation) to solve the leading eigenvalue problems  \eqref{eq:FWstep}, \eqref{eq:AFWstep}, \eqref{eq:GAFWstep}, relying on the fact that due to the pervious two items, each iteration of the power method requires only $O(p^2+np)$ time.
\end{enumerate}
The complete proof of the following theorem is given in the appendix.
\begin{theorem}\label{thm:fastEig}
Let $\beta\in[0,1)$ be some universal constant (e.g., $\beta = 1/2$). Algorithm \ref{alg:cap} admits implementations based on fast algorithms for leading eigenvector computation (e.g., the power method / Lanczos \cite{golub2013matrix, saad2011numerical}), such that each iteration of Algorithm \ref{alg:cap} could be implemented in: $\tilde{O}(np+p^2)$ time when using AFW steps (Eq. \eqref{eq:AFWstep}),  $O(p^3) + \tilde{O}(np+p^2)$ time when using GAFW steps (Eq. \eqref{eq:GAFWstep}), and $\tilde{O}\left({\sqrt{\frac{\Vert{\nabla{}f(\Q_t)}\Vert_2}{\vert{\lambda_p(\nabla{}f(\Q_t))}\vert}}(np+p^2)}\right)$ when using FW steps (Eq. \eqref{eq:FWstep}), where in all cases the $\tilde{O}$ notation hides a logarithmic factor in the dimension $p$ and the probability of failure $\delta$.
\end{theorem}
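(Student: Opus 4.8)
\emph{Proof proposal.} The plan is to separate each iteration of Algorithm~\ref{alg:cap} into two pieces: (i) the ``bookkeeping'' — updating $\Q_t,\Q_t^{-1}$ and the auxiliary data needed to evaluate gradient–vector products, computing $\mu_t,\gamma_t,\Q_{t+1},\Q_{t+1}^{-1}$ — which I will argue always costs $O(np+p^2)$; and (ii) the approximate eigenvalue oracle, whose cost is (number of power/Lanczos iterations)$\times O(np+p^2)$ (plus an $O(p^3)$ preprocessing step in the GAFW case), so that the whole theorem reduces to bounding the number of eigen-solver iterations for each of \eqref{eq:FWstep}, \eqref{eq:AFWstep}, \eqref{eq:GAFWstep}.

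First I would set up the bookkeeping. Besides $\Q_t$ and $\Q_t^{-1}$, the algorithm maintains the $n$ vectors $\y_i^{(t)}:=\Q_t^{-1}\x_i$ and the $n$ scalars $s_i^{(t)}:=\x_i^{\top}\Q_t^{-1}\x_i=\x_i^{\top}\y_i^{(t)}$. Since $\Q_{t+1}=(1-\mu_t)\Q_t+\mu_t\v_t\v_t^{\top}$ is a rank-one update of a scaling of $\Q_t$, the Sherman–Morrison formula (exactly the line for $\Q_{t+1}^{-1}$ in Algorithm~\ref{alg:cap}) refreshes $\Q_t^{-1}$ in $O(p^2)$, and applying the same formula to each $\x_i$ refreshes $\y_i^{(t)}\mapsto\y_i^{(t+1)}$ in $O(p)$ each — using the single vector $\Q_t^{-1}\v_t$ computed once in $O(p^2)$ — hence $O(np)$ in total, after which the $s_i^{(t+1)}$ are read off in $O(np)$. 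The point of these structures is that a gradient–vector product costs only $O(np+p^2)$: by \eqref{eq:grad},
\[
\nabla f(\Q_t)\,\w \;=\; -\frac{p}{n}\sum_{i=1}^{n}\frac{\y_i^{(t)}\,\bigl(\y_i^{(t)\top}\w\bigr)}{s_i^{(t)}}\;+\;\Q_t^{-1}\w,
\]
the sum being $n$ scalar–vector operations ($O(np)$) and the last term $O(p^2)$. Computing $\mu_t,\gamma_t$ then needs one such product and $\v_t^{\top}\Q_t^{-1}\v_t$ ($O(p^2)$), all within budget.

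Next I would handle the three oracles. For AFW, \eqref{eq:AFWstep} asks for $\v_t$ ($\|\v_t\|=\sqrt p$) with $|\v_t^{\top}\nabla f(\Q_t)\v_t|\ge p(1-\beta)\|\nabla f(\Q_t)\|_2$, i.e.\ an approximate top eigenvector in absolute value of $\nabla f(\Q_t)$; I would run Lanczos separately on $\nabla f(\Q_t)$ and on $-\nabla f(\Q_t)$ and keep the larger Rayleigh quotient in magnitude. Using the gap-free Lanczos bound — after $k$ steps, with probability $1-\delta$, $\lambda_{\max}(A)-\theta_k\le C\,(\lambda_{\max}(A)-\lambda_{\min}(A))\,\log^2(p/\delta)/k^2$ for a universal $C$ — together with $\lambda_{\max}(\pm\nabla f(\Q_t))-\lambda_{\min}(\pm\nabla f(\Q_t))\le 2\|\nabla f(\Q_t)\|_2$ and the fact that $\beta$ is a constant, $k=\tilde{O}(1)$ iterations suffice, giving $\tilde{O}(np+p^2)$. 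For GAFW, the parametrization $\u=\Q_t^{-1/2}\v/\|\Q_t^{-1/2}\v\|$ noted in the text turns \eqref{eq:GAFWstep} into computing an approximate top eigenvector in absolute value of $M_t:=\Q_t^{1/2}\nabla f(\Q_t)\Q_t^{1/2}$; I would compute an eigendecomposition of $\Q_t$ once in $O(p^3)$ to obtain $\Q_t^{1/2}$, after which a product $M_t\w=\Q_t^{1/2}\bigl(\nabla f(\Q_t)(\Q_t^{1/2}\w)\bigr)$ again costs $O(np+p^2)$; the same $\tilde{O}(1)$-iteration argument applies, and converting the computed $\u$ back via $\v_t=\sqrt p\,\Q_t^{1/2}\u/\|\Q_t^{1/2}\u\|$ preserves the guarantee since then $\v_t^{\top}\nabla f(\Q_t)\v_t/(\v_t^{\top}\Q_t^{-1}\v_t)=\u^{\top}M_t\u$, for a total of $O(p^3)+\tilde{O}(np+p^2)$.

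The only case where the number of eigen-solver iterations is not $\tilde{O}(1)$ is FW, and I expect this to be the main technical point. Step \eqref{eq:FWstep} requires a $(1-\beta)$-relative approximation to the most negative eigenvalue $\lambda_p(\nabla f(\Q_t))$, i.e.\ to $\lambda_{\max}\bigl(-\nabla f(\Q_t)\bigr)$, which can be much smaller in magnitude than $\|\nabla f(\Q_t)\|_2$. I would run Lanczos on $-\nabla f(\Q_t)$ (equivalently on its PSD shift $-\nabla f(\Q_t)+\tfrac{p+1}{\lambda}\I$ — same Krylov subspace — where $\|\nabla f(\Q_t)\|_2\le(p+1)/\lambda_{\min}(\Q_t)\le(p+1)/\lambda$ follows from \eqref{eq:grad}, $\lambda_{\max}(\Q_t)\le p$, and Lemma~\ref{lem:lowEig}): after $k$ steps, with probability $1-\delta$, $\lambda_{\max}(-\nabla f(\Q_t))-\theta_k\le C\bigl(\lambda_{\max}(-\nabla f(\Q_t))-\lambda_{\min}(-\nabla f(\Q_t))\bigr)\log^2(p/\delta)/k^2\le 2C\,\|\nabla f(\Q_t)\|_2\,\log^2(p/\delta)/k^2$, and the Ritz vector realizes Rayleigh quotient exactly $\theta_k$. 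Requiring this to be $\le\beta\,|\lambda_p(\nabla f(\Q_t))|=\beta\,\lambda_{\max}(-\nabla f(\Q_t))$ yields $k=\tilde{O}\bigl(\sqrt{\|\nabla f(\Q_t)\|_2/|\lambda_p(\nabla f(\Q_t))|}\bigr)$, hence total cost $\tilde{O}\bigl(\sqrt{\|\nabla f(\Q_t)\|_2/|\lambda_p(\nabla f(\Q_t))|}\,(np+p^2)\bigr)$. Since this count depends on quantities unknown a priori, the algorithm runs Lanczos under a doubling schedule on $k$ and stops via the standard a-posteriori residual-based Lanczos error estimate once \eqref{eq:FWstep} is certified, which changes the bound only by a constant. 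The hard part is precisely pinning down the exponent here — it requires the gap-free (rather than gap-dependent Kaniel–Paige–Saad) Lanczos bound for the largest eigenvalue of a symmetric matrix with error measured relative to the full spectral width $\lambda_{\max}-\lambda_{\min}$, combined with the crude spectral-norm bound from Lemma~\ref{lem:lowEig} so that the PSD-making shift does not inflate the rate; the remaining pieces (the Sherman–Morrison bookkeeping and the reductions of the three oracles to leading-eigenvector problems) are routine.
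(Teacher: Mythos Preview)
Your proposal is correct and follows the same overall architecture as the paper's proof: both arguments maintain $\Q_t^{-1}$ and the vectors $\Q_t^{-1}\x_i$ via Sherman--Morrison so that each gradient--vector product costs $O(np+p^2)$, and both reduce the three oracles to a bounded number of leading-eigenvector calls (plus an $O(p^3)$ computation of $\Q_t^{1/2}$ for GAFW). The one substantive difference is in how the eigenvalue subproblem is handled: the paper first computes an estimate $C\approx\Vert\nabla f(\Q_t)\Vert_2$ via an approximate top eigenvector of $(\nabla f(\Q_t))^2$, then works with the explicitly PSD shifts $\M_{\pm}=(1-\tilde\beta)^{-1}C\I\pm\nabla f(\Q_t)$ and applies the standard multiplicative-error power-method guarantee to each, whereas you invoke the gap-free Lanczos bound directly on $\pm\nabla f(\Q_t)$. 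Both reductions are standard and give the same runtimes; your version is slightly more streamlined, while the paper's is more modular in that everything is phrased as a black-box call to a constant-relative-error PSD leading-eigenvector routine. Your remark about a doubling schedule for the FW case is also a useful addition --- the paper sets the inner accuracy $\hat\beta$ in terms of the unknown $\vert\lambda_p(\nabla f(\Q_t))\vert$ without commenting on how to choose it adaptively.
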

\begin{remark}
Note that the worst-case time to solve the standard FW eigenvalue problem, in terms of the dependence on the size of the data $np$, is much worse than that of AFW and GAFW. This is because, while the eigenvalue problems in AFW, GAFW require to approximate the largest eigenvalue in \textit{magnitude}, FW requires to approximate the smallest signed eigenvalue.
\end{remark}
\begin{remark}
Since leading eigenvector algorithms such as the power method are usually initialized with a random vector, their guarantees only hold with high probability, as captured in Theorem \ref{thm:fastEig}. However, since the dependence on the probability of failure is only logarithmic, for the clarity of presentation, henceforth we neglect such considerations and treat these computations as if they always succeed. 
\end{remark}

\section{Sublinear Convergence of Algorithm \ref{alg:cap}}\label{sec:sublin}

While the measure of convergence in our sublinear rates for AFW and GAFW will be simply the distance in spectral norm from satisfying the TME equation \eqref{eq:TME}
, our measure of convergence for the standard FW variant is slightly less obvious and is motivated by the following observation, the proof of which is given in the appendix.

\begin{observation}\label{obsrv:FWconv}
For any $\Q\in\mS_{p+}$, 
$\lambda_{\min}\Big({\Q - \frac{p}{n}\sum_{i=1}^n\frac{\x_i\x_i^{\top}}{\x_i^{\top}\Q^{-1}\x_i}}\Big) \leq 0$, and $\lambda_{\min}\Big({\Q - \frac{p}{n}\sum_{i=1}^n\frac{\x_i\x_i^{\top}}{\x_i^{\top}\Q^{-1}\x_i}}\Big) = 0$ if and only if $\Q = \Q^*$.
\end{observation}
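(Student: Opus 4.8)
The plan is to analyze the matrix $\M(\Q) := \Q - \frac{p}{n}\sum_{i=1}^n\frac{\x_i\x_i^{\top}}{\x_i^{\top}\Q^{-1}\x_i}$ by relating it to the gradient $\nabla f(\Q)$ given in Eq.~\eqref{eq:grad}. The key identity is that $\Q\,\nabla f(\Q)\,\Q = \Q - \frac{p}{n}\sum_{i=1}^n\frac{\x_i\x_i^{\top}}{\x_i^{\top}\Q^{-1}\x_i} = \M(\Q)$, obtained by multiplying \eqref{eq:grad} on both sides by $\Q$ and using $\Q^{-1}\Q = \I$. So I would first establish $\M(\Q) = \Q\nabla f(\Q)\Q$, which reduces the problem to understanding the sign of $\lambda_{\min}(\Q\nabla f(\Q)\Q)$.

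Next I would prove $\lambda_{\min}(\M(\Q)) \le 0$. Since $\Q \succ 0$, the congruence transformation $\M \mapsto \Q\M\Q$ preserves inertia (Sylvester's law of inertia), so $\lambda_{\min}(\Q\nabla f(\Q)\Q) \le 0$ if and only if $\lambda_{\min}(\nabla f(\Q)) \le 0$. To see the latter, note $\langle \Q, \nabla f(\Q)\rangle = \trace(\Q\nabla f(\Q)) = -\frac{p}{n}\sum_{i=1}^n \frac{\x_i^\top \Q^{-1}\Q\Q^{-1}\x_i}{\x_i^\top\Q^{-1}\x_i} + \trace(\Q^{-1}\Q) = -p + p = 0$ (this is the "$\langle \Q_t,\nabla f(\Q_t)\rangle = 0$" fact already used in the text). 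Since $\Q \succ 0$ and $\langle \Q, \nabla f(\Q)\rangle = 0$ with $\Q$ having strictly positive eigenvalues, $\nabla f(\Q)$ cannot be positive definite, hence $\lambda_{\min}(\nabla f(\Q)) \le 0$, giving $\lambda_{\min}(\M(\Q)) \le 0$.

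For the equivalence, the "if" direction is immediate: if $\Q = \Q^*$ then by \eqref{eq:TME}, $\M(\Q^*) = 0$, so $\lambda_{\min}(\M(\Q^*)) = 0$. For the "only if" direction, suppose $\lambda_{\min}(\M(\Q)) = 0$, i.e.\ $\lambda_{\min}(\nabla f(\Q)) = 0$ by the inertia argument. Combined with $\langle \Q, \nabla f(\Q)\rangle = \trace(\Q^{1/2}\nabla f(\Q)\Q^{1/2}) = 0$ and the fact that $\Q^{1/2}\nabla f(\Q)\Q^{1/2} \succeq 0$ (same inertia as $\nabla f(\Q)$, which is now PSD), a PSD matrix with zero trace must be the zero matrix, so $\Q^{1/2}\nabla f(\Q)\Q^{1/2} = 0$, hence $\nabla f(\Q) = 0$. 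By the discussion following Eq.~\eqref{eq:grad}, this means $\Q$ satisfies \eqref{eq:TME}, and since $\trace(\Q) = p$ (as $\Q \in \mS_{p+}$), uniqueness from the cited Theorem forces $\Q = \Q^*$.

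I expect the only mild subtlety to be bookkeeping with the congruence/inertia argument and making sure the trace computations are clean; there is no real obstacle here since everything follows from Sylvester's law of inertia plus the already-established identities $\langle \Q,\nabla f(\Q)\rangle = 0$ and $\nabla f(\Q) = 0 \iff \Q$ solves \eqref{eq:TME}. The main thing to be careful about is invoking the uniqueness theorem correctly to pin down $\Q = \Q^*$ rather than just "$\Q$ solves \eqref{eq:TME}."
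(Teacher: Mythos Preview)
Your proposal is correct and follows essentially the same route as the paper: both use the identity $\Q\nabla f(\Q)\Q = \M(\Q)$, the fact that $\langle \Q,\nabla f(\Q)\rangle = 0$, and positive definiteness of $\Q$ to deduce $\lambda_{\min}\le 0$ with equality forcing $\nabla f(\Q)=0$. The only cosmetic difference is that you phrase the sign/equality argument via Sylvester's law of inertia and the ``PSD with zero trace'' trick on $\Q^{1/2}\nabla f(\Q)\Q^{1/2}$, whereas the paper works directly with the eigendecomposition $\nabla f(\Q)=\sum_i\lambda_i\u_i\u_i^{\top}$ and the strictly positive weights $\u_i^{\top}\Q\u_i$; these are the same argument in slightly different clothing.
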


\begin{theorem}[$O(1/\epsilon^2)$ convergence of Algorithm \ref{alg:cap}]\label{thm:sublin}
Consider the iterates of Algorithm \ref{alg:cap} and define the function $T(\tilde{\epsilon}) =  \lceil{4(f(\Q_0) - f(\Q^*))(1+\tilde{\epsilon}^{-2})}\rceil.$ Fix $\epsilon > 0$ and define
\begin{align*}
\tilde{\epsilon}_{\textrm{FW}} &:= \left(\min_{\Q\in\mS_p: f(\Q) \leq f(\Q_0)}\frac{\lambda_{\min}(\Q)}{\lambda^2_{\max}(\Q)}\right)(1-\beta)\epsilon,  ~ \tilde{\epsilon}_{\textrm{AFW}} := \left(\min_{\Q\in\mS_p: f(\Q) \leq f(\Q_0)}\frac{\lambda_{\min}(\Q)}{\lambda^2_{\max}(\Q)}\right)(1-\beta)\epsilon, \\
 \tilde{\epsilon}_{\textrm{GAFW}} &:= \left({\max_{\Q\in\mS_p:f(\Q) \leq f(\Q_0)}\lambda_{\max}(\Q)}\right)^{-1}(1-\beta)\epsilon.
\end{align*}
Then, when using FW steps (Eq. \ref{eq:FWstep}), it holds that for all $t\geq T(\tilde{\epsilon}_{\textrm{FW}})$, 
\begin{align}\label{eq:sublinRes:1}
\max_{\tau=0,\dots,t-1}\lambda_{\min}\left({\Q_{\tau} - \frac{p}{n}\sum_{i=1}^n\frac{\x_i\x_i^{\top}}{\x_i^{\top}\Q_{\tau}^{-1}\x_i}}\right) \geq -\epsilon.
\end{align}
When using AFW steps (Eq. \eqref{eq:AFWstep}) or GAFW steps (Eq. \eqref{eq:GAFWstep}), it holds for all $t\geq T(\tilde{\epsilon}_{\textrm{AFW}})$ or $t\geq T(\tilde{\epsilon}_{\textrm{GAFW}})$, respectively, that
\begin{align}\label{eq:sublinRes:2}
\min_{\tau=0,\dots,t-1}\left\|{\Q_{\tau} - \frac{p}{n}\sum_{i=1}^n\frac{\x_i\x_i^{\top}}{\x_i^{\top}\Q_{\tau}^{-1}\x_i}}\right\|_2 \leq \epsilon.
\end{align}
\end{theorem}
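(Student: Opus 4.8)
The plan is to obtain a ``per-iteration descent lemma'' that lower-bounds $f(\Q_t) - f(\Q_{t+1})$ in terms of the eigenvalue quantity the oracle controls, and then telescope. Concretely, I would first analyze the update $\Q_{t+1} = \Q_t + \mu_t(\v_t\v_t^\top - \Q_t)$. Writing $g_t := -\v_t^\top \nabla f(\Q_t)\v_t$ and $s_t := \v_t^\top \Q_t^{-1}\v_t$, and noting that $\Q_{t+1} = (1-\mu_t)\Q_t + \mu_t \v_t\v_t^\top$ differs from $\Q_t$ by the rank-one-plus-scaling perturbation made explicit by the Sherman--Morrison update in Algorithm~\ref{alg:cap}, I can evaluate $f(\Q_{t+1})$ exactly. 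The $\log\det$ part satisfies $\log\det \Q_{t+1} = \log\det\Q_t + p\log(1-\mu_t) + \log(1 + \gamma_t s_t)$ with $\gamma_t = \mu_t/(1-\mu_t)$, and each term $\log(\x_i^\top \Q_{t+1}^{-1}\x_i)$ can be written via Sherman--Morrison in closed form as well. The key point is that the specific step-size $\mu_t = g_t/(s_t^2 - g_t)$ (equivalently $\gamma_t = g_t/s_t^2$) is exactly the minimizer, or a good enough surrogate minimizer, of a tractable scalar lower bound on $f(\Q_t + \mu(\v_t\v_t^\top - \Q_t))$ as a function of $\mu$; plugging it in yields a clean bound of the shape
\begin{align*}
f(\Q_t) - f(\Q_{t+1}) \;\geq\; \phi\!\left(\frac{g_t}{s_t^2}\right),
\end{align*}
where $\phi(x) = x - \log(1+x) \geq \tfrac14\min\{x, x^2\}$ (or a similar elementary inequality). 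This is the technical heart; I expect most of the work (and the place the paper's Lemma~\ref{lem:errorReduce} does the real labor) is verifying this descent inequality rigorously using convexity of $-\log$ to lower-bound the data terms, so that nonsmoothness/nonconvexity of $f$ never actually obstructs a clean one-step guarantee.

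Next I would convert the controlled quantity $g_t/s_t^2$ into the convergence measures. For GAFW, the oracle \eqref{eq:GAFWstep} directly gives $g_t/s_t \geq (1-\beta)\,p\,\|\Q_t^{1/2}\nabla f(\Q_t)\Q_t^{1/2}\|_2$; combined with $s_t = \v_t^\top\Q_t^{-1}\v_t \leq p\,\lambda_{\min}(\Q_t)^{-1} \le p/\lambda$ on the level set (Lemma~\ref{lem:lowEig}), this lower-bounds $g_t/s_t^2$ by a constant times $\|\Q_t^{1/2}\nabla f(\Q_t)\Q_t^{1/2}\|_2$ up to the factor $\lambda_{\max}(\Q_t)$ hidden in relating $s_t$ to $\lambda_{\max}$. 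For AFW, \eqref{eq:AFWstep} gives $g_t \geq p(1-\beta)\|\nabla f(\Q_t)\|_2$ (note $|\v_t^\top\nabla f\,\v_t| = g_t$ because $\Q_t\nabla f(\Q_t)$ has zero trace forces the extreme eigenvalue chosen to have the sign making $g_t>0$, or one flips $\mu_t$), and again $s_t \in [\,\lambda_{\min}(\Q_t),\ \lambda_{\max}(\Q_t)\,]\cdot$const, so $g_t/s_t^2 \gtrsim \|\nabla f(\Q_t)\|_2 \cdot \lambda_{\min}(\Q_t)/\lambda_{\max}^2(\Q_t)$. The FW case only controls $-\lambda_{\min}(\nabla f(\Q_t))$, which is why its guarantee is stated through that one-sided quantity. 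Throughout, one must check iterates stay in the level set $\{f(\Q) \leq f(\Q_0)\}$ — immediate by induction from the descent lemma — so that the constants $\lambda, \lambda_{\max}(\Q_t)$ are uniformly bounded and Lemma~\ref{lem:lowEig} applies.

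Then I telescope: summing the descent inequality over $\tau = 0,\dots,t-1$ gives $\sum_\tau \phi(g_\tau/s_\tau^2) \leq f(\Q_0) - f(\Q^*)$, so $\min_\tau \phi(g_\tau/s_\tau^2) \leq (f(\Q_0)-f(\Q^*))/t$. Using $\phi(x)\ge \tfrac14\min\{x,x^2\}$, whenever $t \geq 4(f(\Q_0)-f(\Q^*))(1+\tilde\epsilon^{-2})$ we get $\min_\tau g_\tau/s_\tau^2 \leq \tilde\epsilon$ for the appropriate $\tilde\epsilon$ — this is precisely the definition of $T(\tilde\epsilon)$ in the statement, with the ``$1+\tilde\epsilon^{-2}$'' absorbing both regimes of the $\min\{x,x^2\}$ bound. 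Finally I translate $g_\tau/s_\tau^2 \leq \tilde\epsilon$ back: for AFW/GAFW, $g_\tau/s_\tau^2$ bounded implies the relevant gradient norm is small, and since $\nabla f(\Q) = \Q^{-1} - \Q^{-1}(\frac{p}{n}\sum_i \frac{\x_i\x_i^\top}{\x_i^\top\Q^{-1}\x_i})\Q^{-1}$, multiplying on both sides by $\Q_\tau$ shows $\|\Q_\tau - \frac{p}{n}\sum_i \frac{\x_i\x_i^\top}{\x_i^\top\Q_\tau^{-1}\x_i}\|_2 \leq \|\Q_\tau\|_2^2\,\|\nabla f(\Q_\tau)\|_2 \le \lambda_{\max}^2(\Q_\tau)\|\nabla f(\Q_\tau)\|_2$ (and similarly, more directly, for the geodesic gradient in the GAFW case), which after choosing $\tilde\epsilon$ with the stated $\lambda_{\min}/\lambda_{\max}^2$ or $\lambda_{\max}^{-1}$ prefactor yields \eqref{eq:sublinRes:2}; for FW, the same bookkeeping with the one-sided quantity and Observation~\ref{obsrv:FWconv} yields \eqref{eq:sublinRes:1}. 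The main obstacle is the first step — establishing the exact/near-exact optimality of the adaptive step-size $\mu_t$ and the resulting clean descent bound; once that is in hand, the rest is telescoping plus eigenvalue bookkeeping on the level set.
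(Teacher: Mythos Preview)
Your strategy is exactly the paper's: prove a per-step descent inequality (Lemma~\ref{lem:errorReduce}), telescope it (Corollary~\ref{corr:sublinRate}), then convert to the stated convergence measures via eigenvalue bounds on the level set. The one substantive slip is the scalar you track. The quantity that actually falls out of the Sherman--Morrison/$\log\det$ calculation is
\[
L_t := \frac{\v_t^\top\nabla f(\Q_t)\v_t}{\v_t^\top\Q_t^{-1}\v_t} \;=\; -\,\frac{g_t}{s_t},
\]
\emph{not} $g_t/s_t^2 = \gamma_t$; the paper obtains $f(\Q_{t+1}) - f(\Q_t) \leq -\tfrac14\min\{1, L_t^2\}$, and it is the constant $1$ inside the $\min$ (not $|L_t|$) that produces the ``$1+\tilde\epsilon^{-2}$'' in $T(\tilde\epsilon)$. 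With your $g_t/s_t^2$ you are off by a factor $s_t \in (1,\, p/\lambda_{\min}(\Q_t)]$, and the conversion step would then not recover the stated $\tilde\epsilon_{\textrm{FW}}, \tilde\epsilon_{\textrm{AFW}}, \tilde\epsilon_{\textrm{GAFW}}$.

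Once you work with $L_t$, the conversion is exactly as the paper does it: for AFW, $|L_t| \geq (1-\beta)\lambda_{\min}(\Q_t)\,\|\nabla f(\Q_t)\|_2$ (from $s_t \leq p\lambda_{\min}^{-1}(\Q_t)$) and $\|\nabla f(\Q_t)\|_2 \geq \lambda_{\max}^{-2}(\Q_t)\,\|\Q_t - \tfrac{p}{n}\sum_i \x_i\x_i^\top/(\x_i^\top\Q_t^{-1}\x_i)\|_2$; for GAFW, the oracle \eqref{eq:GAFWstep} gives $|L_t| \geq (1-\beta)\|\Q_t^{1/2}\nabla f(\Q_t)\Q_t^{1/2}\|_2$ directly (no extra factor of $p$, contrary to what you wrote), and then $\|\Q_t^{1/2}\nabla f(\Q_t)\Q_t^{1/2}\|_2 \geq \lambda_{\max}^{-1}(\Q_t)\|\Q_t - \cdots\|_2$; the FW case is analogous with $-\lambda_{\min}(\nabla f(\Q_t))$ in place of $\|\nabla f(\Q_t)\|_2$.
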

\begin{remark}
Note that one motivation for considering the use of the GAFW variant in light of Theorem \ref{thm:sublin}, is that it enjoys better conditioning than AFW in terms of the maximal condition number $\lambda_{\max}(\Q) / \lambda_{\min}(\Q)$ over the initial level set $\{\Q\in\mS_p~|~f(\Q) \leq f(\Q_0)\}$.
\end{remark}
The complete proof of Theorem \ref{thm:sublin} is given in the appendix. Below we state and prove  the key technical step --- a bound on the improvement in function value that Algorithm \ref{alg:cap} makes on a single iteration. In particular, the lemma is independent of the way the vector $\v_t$ is generated and hence applies to all three variants described in Algorithm \ref{alg:cap}.

\begin{lemma}\label{lem:errorReduce}
Fix some iteration $t$ of Algorithm \ref{alg:cap} and define $L_t = \frac{\v_t^{\top}\nabla f(\Q_t)\v_t}{\v_t^{\top}\Q_t^{-1}\v_t}$. It holds that 
\begin{equation}\label{eq:ascendreduction}
f(\Q_{t+1}) -f(\Q_t) \leq  - \frac{1}{4}\min\{1, L_t^2\}.
\end{equation}
\end{lemma}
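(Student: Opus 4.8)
The plan is to analyze the change in objective value induced by the rank-one update $\Q_{t+1} = \Q_t + \mu_t(\v_t\v_t^{\top} - \Q_t) = (1-\mu_t)\Q_t + \mu_t\v_t\v_t^{\top}$ as a one-dimensional function of a scalar step parameter, and then plug in the specific choice $\mu_t$ (equivalently $\gamma_t = \mu_t/(1-\mu_t)$) prescribed by the algorithm. Concretely, I would write $g(\mu) := f\big((1-\mu)\Q_t + \mu\v_t\v_t^{\top}\big)$ and aim for an exact or near-exact closed form for $g(\mu) - g(0)$. The key structural fact is that $f$ splits into a $\log\det$ term and a sum of $\log$ terms of quadratic forms, and both behave nicely under rank-one perturbations: $\log\det\big((1-\mu)\Q_t + \mu\v_t\v_t^{\top}\big) = p\log(1-\mu) + \log\det\Q_t + \log\!\big(1 + \tfrac{\mu}{1-\mu}\v_t^{\top}\Q_t^{-1}\v_t\big)$ by the matrix determinant lemma, and similarly each term $\log\big(\x_i^{\top}((1-\mu)\Q_t + \mu\v_t\v_t^{\top})^{-1}\x_i\big)$ can be expanded via the Sherman–Morrison formula. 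After collecting terms, the $p\log(1-\mu)$ from $\log\det$ should cancel against a matching $-\tfrac{p}{n}\sum_i \log(1-\mu) \cdot$ (something) — actually against $-\log(1-\mu)$ coming out of each inverted quadratic form scaled by $p/n$ summed over $n$ points — leaving a clean expression.

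After simplification I expect $g(\mu) - g(0)$ to reduce to something of the form $\log(1 + \gamma_t a_t) - \tfrac{p}{n}\sum_{i=1}^n \log\!\big(1 + \gamma_t b_{t,i}\big)$ where $a_t = \v_t^{\top}\Q_t^{-1}\v_t$ and $b_{t,i}$ involves $\frac{(\v_t^{\top}\Q_t^{-1}\x_i)^2}{\x_i^{\top}\Q_t^{-1}\x_i}$, with $\gamma_t$ as defined in the algorithm. The next step is to bound the sum-of-logs term from below; here I would use concavity of $\log$ together with the fact — following directly from the definitions and from $\langle \Q_t, \nabla f(\Q_t)\rangle = 0$ noted in the text — that the average of the $b_{t,i}$'s (weighted by $p/n$) relates to $a_t$ and to $\v_t^{\top}\nabla f(\Q_t)\v_t$. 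A cleaner route may be to avoid the explicit point-by-point expansion: instead bound $g$ from above by a quadratic-in-$\mu$ surrogate using a curvature estimate, i.e., establish $g(\mu) \le g(0) + \mu\langle \v_t\v_t^{\top} - \Q_t, \nabla f(\Q_t)\rangle + C\mu^2$ for an appropriate curvature constant $C$ that the algorithm's step-size is designed to match, then optimize over $\mu$. Given the definition $\mu_t = \frac{-\v_t^{\top}\nabla f(\Q_t)\v_t}{(\v_t^{\top}\Q_t^{-1}\v_t)^2 - \v_t^{\top}\nabla f(\Q_t)\v_t}$, and using $\langle \Q_t,\nabla f(\Q_t)\rangle = 0$ so that $\langle \v_t\v_t^{\top} - \Q_t, \nabla f(\Q_t)\rangle = \v_t^{\top}\nabla f(\Q_t)\v_t$, plugging this $\mu_t$ into the surrogate should yield a decrease of order $\frac{(\v_t^{\top}\nabla f(\Q_t)\v_t)^2}{(\v_t^{\top}\Q_t^{-1}\v_t)^2}\cdot\frac{1}{\text{(something)}}$, i.e., controlled by $L_t^2$, with the $\min\{1, L_t^2\}$ and the constant $1/4$ emerging from handling the two regimes $|L_t|\le 1$ and $|L_t|>1$ separately (when $|L_t|$ is large the step-size saturates and one gets a constant decrease).

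The main obstacle I anticipate is getting a tight enough handle on the $\log\det$ and Sherman–Morrison expansions so that the cross terms genuinely telescope and the residual is a clean concave function of $\gamma_t$ that can be bounded using only $L_t$. In particular, one must verify that the quantity $\v_t^{\top}\nabla f(\Q_t)\v_t$ appearing in $L_t$ really is the right first-order coefficient after the $\log(1-\mu)$ cancellations — this hinges on the identity $\v_t^{\top}\nabla f(\Q_t)\v_t = \v_t^{\top}\Q_t^{-1}\v_t - \tfrac{p}{n}\sum_i \frac{(\v_t^{\top}\Q_t^{-1}\x_i)^2}{\x_i^{\top}\Q_t^{-1}\x_i}$ from Eq. \eqref{eq:grad} — and that the second-order behavior is uniformly controlled by $(\v_t^{\top}\Q_t^{-1}\v_t)^2$, which is where the particular denominator in $\mu_t$ comes from. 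The two-regime case analysis to extract $\min\{1,L_t^2\}$ and the absolute constant $1/4$ should be routine once the surrogate inequality $f(\Q_{t+1}) - f(\Q_t) \le -\mu_t L_t \,(\v_t^\top \Q_t^{-1}\v_t) + \tfrac{1}{2}\mu_t^2 (\v_t^\top \Q_t^{-1}\v_t)^2$ (or its exact-function analogue) is in hand; I would then just substitute $\mu_t = \frac{L_t}{\v_t^\top\Q_t^{-1}\v_t + |L_t|\cdot(\dots)}$-type expression and simplify.
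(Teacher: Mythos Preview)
Your primary route --- exact expansion via Sherman--Morrison and the matrix determinant lemma, then control via concavity of $\log$ --- is exactly the paper's approach, and your identity $\v_t^{\top}\nabla f(\Q_t)\v_t = \v_t^{\top}\Q_t^{-1}\v_t - \tfrac{p}{n}\sum_i (\x_i^{\top}\Q_t^{-1}\v_t)^2/(\x_i^{\top}\Q_t^{-1}\x_i)$ is precisely the hinge. Two corrections are needed, however. First, after the $p\log(1-\mu_t)$ cancellation the exact expression is
\[
f(\Q_{t+1})-f(\Q_t)=\log(1+\gamma_t a_t)+\frac{p}{n}\sum_{i=1}^n\log\Big(1-\frac{\gamma_t}{1+\gamma_t a_t}\,b_{t,i}\Big),
\]
with $a_t=\v_t^{\top}\Q_t^{-1}\v_t$ and $b_{t,i}=(\x_i^{\top}\Q_t^{-1}\v_t)^2/(\x_i^{\top}\Q_t^{-1}\x_i)$; note the extra factor $1/(1+\gamma_t a_t)$ inside the sum, which your anticipated form misses and which matters for the next step. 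Second --- and this is the piece your sketch does not yet contain --- the paper does \emph{not} pass to a quadratic surrogate. It applies $\log(1+x)\le x$ termwise to the sum (collapsing it via the identity above), and then bounds the single remaining term $\log(1+\gamma_t a_t)$ by the sharper inequalities $\log(1+x)\le x-\tfrac{x^2}{2(1+x)}$ for $x\ge 0$ and $\log(1+x)\le x-\tfrac{x^2}{2+x}$ for $-1<x\le 0$. The specific choice $\gamma_t=-\v_t^{\top}\nabla f(\Q_t)\v_t/a_t^2$ is engineered so that all the linear-in-$x$ pieces cancel \emph{exactly}, leaving $f(\Q_{t+1})-f(\Q_t)\le -\tfrac{(\gamma_t a_t)^2}{2(1+|\gamma_t a_t|)}=-\tfrac{L_t^2}{2(1+|L_t|)}$, from which your anticipated case split on $|L_t|\lessgtr 1$ immediately gives the $\tfrac{1}{4}\min\{1,L_t^2\}$.

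Your ``cleaner'' alternative --- a global quadratic upper bound $g(\mu)\le g(0)+\mu\,\v_t^{\top}\nabla f(\Q_t)\v_t+\tfrac{1}{2}\mu^2(\v_t^{\top}\Q_t^{-1}\v_t)^2$ --- is a wrong turn: it would require a uniform curvature bound on $g''$ along the ray that is neither established nor true in the form you need (the second derivative of the $\log\det$ part alone is not dominated by $(\v_t^{\top}\Q_t^{-1}\v_t)^2$ globally), and in any case it cannot deliver the correct behavior in the regime $|L_t|>1$ where the decrease saturates at a constant. Stick with the exact $\log$ inequalities above.
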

\begin{remark}
An immediate important consequence of  Lemma \ref{lem:errorReduce} is that Algorithm \ref{alg:cap} is a decent method, i.e., the function value never increases from one iteration to the next. Lemma \ref{lem:errorReduce} also reveals one motivation for the GAFW step in Eq. \eqref{eq:GAFWstep}: (up to a $(1-\beta)$ factor) it maximizes the reduction in function value.
\end{remark}
\begin{proof}[Proof of Lemma \ref{lem:errorReduce}]
Using the definition of $\Q_{t+1}$ we have that,
\begin{align}\label{eq:ascendlemma:1}
    f(\Q_{t+1}) = \frac{p}{n}\sum_{i=1}^n\log(\x_i^{\top} ((1 - \mu_t)\Q_t + \mu_t \v_t\v_t^{\top})^{-1}\x_i) + \log(\det((1 - \mu_t)\Q_t + \mu_t \v_t\v_t^{\top})).
\end{align}

Using the Sherman-Morrison formula we have that,
\begin{align}\label{eq:ascendlemma:2}
    ((1 - \mu_t)\Q_t + \mu_t \v_t\v_t^{\top})^{-1} = \frac{1}{1-\mu_t} \Big(\Q_t^{-1} - \gamma_t\frac{\Q_t^{-1} \v_t \v_t^{\top} \Q_t^{-1}}{1 + \gamma_t \v_t^{\top} \Q_t^{-1}\v_t}\Big),
\end{align}
where we recall that $\gamma_t = \frac{\mu_t}{1-\mu_t}$.

Using the well-known matrix determinant lemma for rank-one updates we have, 
\begin{align}\label{eq:ascendlemma:3}
    \det((1 - \mu_t)\Q_t + \mu_t \v_t\v_t^{\top}) = {(1 - \mu_t)^p}(1 + \gamma_t \v_t^{\top} \Q_t^{-1}\v_t)\det(\Q_t).
\end{align}

Plugging \eqref{eq:ascendlemma:2} and \eqref{eq:ascendlemma:3} into \eqref{eq:ascendlemma:1}, we obtain
\begin{align*}
    f(\Q_{t+1}) &= \frac{p}{n}\sum_{i=1}^n\log\Bigg(\frac{1}{1
    -\mu_t} \x_i^{\top}\Big(\Q_t^{-1} - \gamma_t\frac{\Q_t^{-1} \v_t \v_t^{\top} \Q_t^{-1}}{1 + \gamma_t \v^{\top} \Q_t^{-1}\v_t}\Big)\x_i\Bigg) \\
    &\quad + \log\big({(1 - \mu_t)^p}(1 + \gamma_t \v_t^{\top} \Q_t^{-1}\v_t)\det(\Q_t)\big)\\
    &= \frac{p}{n}\sum_{i=1}^n\log\Bigg(\x_i^{\top} \Big(\Q_t^{-1} - \gamma_t\frac{\Q_t^{-1} \v_t\v_t^{\top} \Q_t^{-1}}{1 + \gamma \v_t^{\top} \Q_t^{-1}\v_t}\Big)\x_i\Bigg) + \log\big((1 + \gamma_t \v^{\top} \Q_t^{-1}\v_t)\det(\Q_t)\big)\\
    &= \frac{p}{n}\sum_{i=1}^n\log\Bigg(\x_i^{\top} \Q_t^{-1}\x_i - \gamma_t \frac{(\x_i^{\top} \Q_t^{-1}\v_t)^2}{1 + \gamma_t \v_t^{\top} \Q_t^{-1}\v_t}\Bigg) + \log(1 + \gamma_t \v_t^{\top} \Q_t^{-1}\v_t) + \log\det(\Q_t)\\
    &= \frac{p}{n}\sum_{i=1}^n\Bigg(\log(\x_i^{\top} \Q_t^{-1}\x_i) + \log\Big(1 - \gamma_t \frac{1}{1 + \gamma_t \v_t^{\top} \Q_t^{-1}\v_t}\frac{(\x_i^{\top} \Q_t^{-1}\v_t)^2}{\x_i^{\top} \Q_t^{-1}\x_i}\Big)\Bigg) \\
    &\quad + \log(1 + \gamma_t \v_t^{\top} \Q_t^{-1}\v_t) + \log\det(\Q_t)\\
    &=f(\Q_t) + \frac{p}{n}\sum_{i=1}^n\log\Big(1 - \gamma_t \frac{1}{1 + \gamma_t \v_t^{\top} \Q_t^{-1}\v_t}\frac{(\x_i^{\top} \Q_t^{-1}\v_t)^2}{\x_i^{\top} \Q_t^{-1}\x_i}\Big) + \log(1 + \gamma_t \v_t^{\top} \Q_t^{-1}\v_t) \\
    &\underset{(a)}{\leq} f(\Q_t) - \frac{\gamma_t}{1+\gamma_t\v_t^{\top}\Q_t^{-1}\v_t}\frac{p}{n}\sum_{i=1}^n\frac{(\x_i^{\top}\Q_t^{-1}\v_t)^2}{\x_i^{\top}\Q_t^{-1}\x_i} + \log(1 + \gamma_t \v_t^{\top} \Q_t^{-1}\v_t) \\
 &= f(\Q_t) + \frac{\gamma_t}{1+\gamma_t\v_t^{\top}\Q_t^{-1}\v_t}\left({\v_t^{\top}\nabla{}f(\Q_t)\v_t - \v_t^{\top}\Q_t^{-1}\v_t}\right)+ \log(1 + \gamma_t \v_t^{\top} \Q_t^{-1}\v_t),
\end{align*}
where (a) follows from the inequality $\log(1+x) \leq x$.

We now consider two cases. If $\gamma_t \geq 0$, using the inequality $\log(1+x) \leq \frac{x}{2}\frac{2+x}{1+x} = x - \frac{x^2}{2(1+x)}$ for all $x \geq 0$, we have that
\begin{align*}
\log(1 + \gamma_t \v_t^{\top} \Q_t^{-1}\v_t) \leq \gamma_t\v_t^{\top}\Q_t^{-1}\v_t - \frac{\gamma_t^2(\v_t^{\top}\Q_t^{-1}\v_t)^2}{2(1+\gamma_t\v_t^{\top}\Q_t^{-1}\v_t)}.
\end{align*}

If $\gamma_t < 0$, using the inequality $\log(1+x) \leq \frac{2x}{2+x} = x - \frac{x^2}{2+x}$ for all $0 \geq x > -1$, we have that
\begin{align*}
\log(1 + \gamma_t \v_t^{\top} \Q_t^{-1}\v_t) \leq \gamma_t\v_t^{\top}\Q_t^{-1}\v_t - \frac{\gamma_t^2(\v_t^{\top}\Q_t^{-1}\v_t)^2}{2+\gamma_t\v_t^{\top}\Q_t^{-1}\v_t}.
\end{align*}

It is easily verified that for our choice $\gamma_t = \frac{-\v_t^{\top}\nabla{}f(\Q_t)\v_t}{(\v_t^{\top}\Q_t^{-1}\v_t)^2}$, it holds that
\begin{align*}
\frac{\gamma_t}{1+\gamma_t\v_t^{\top}\Q_t^{-1}\v_t}\left({\v_t^{\top}\nabla{}f(\Q_t)\v_t - \v_t^{\top}\Q_t^{-1}\v_t}\right) + \gamma_t\v_t^{\top}\Q_t^{-1}\v_t = 0.
\end{align*}
Thus, considering both options for $\gamma_t$ ($\geq 0$ or $<0$), we have that
\begin{align*}
f(\Q_{t+1}) -f(\Q_t) &\leq - \frac{\gamma_t^2(\v_t^{\top}\Q_t^{-1}\v_t)^2}{2(1+\vert{\gamma_t\v_t^{\top}\Q_t^{-1}\v_t}\vert)} = -\frac{L_t^2}{2(1+\vert{L_t}\vert)}.
\end{align*}
The Lemma follows from considering the two cases $\vert{L_t}\vert \geq 1$ and $\vert{L_t}\vert < 1$, and simplifying.

\end{proof}

\begin{corollary}\label{corr:sublinRate}
Fix $\epsilon >0$. For any $t\geq  \left\lceil{4(f(\Q_0) - f(\Q^*))\left({1+\epsilon^{-2}}\right)}\right\rceil$ iterations of Algorithm \ref{alg:cap} it holds that,
$\min_{\tau=0,\dots,t-1}\vert{L_{\tau}}\vert \leq \epsilon$.
\end{corollary}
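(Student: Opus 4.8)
The plan is to telescope the per-iteration decrease from Lemma~\ref{lem:errorReduce} and combine it with the lower bound $f(\Q^*) \leq f(\Q_t)$ that holds for all $t$ (since $\Q^*$ is the global minimizer of Problem~\eqref{eq:TMEprob} over $\mS_{p+}$, and every iterate $\Q_t$ is feasible by Lemma~\ref{lem:feas}). Concretely, suppose toward a contradiction that $\vert L_\tau \vert > \epsilon$ for every $\tau = 0,\dots,t-1$. I will split into the two cases appearing in Lemma~\ref{lem:errorReduce}: either $\vert L_\tau \vert \geq 1$, in which case $f(\Q_{\tau+1}) - f(\Q_\tau) \leq -\tfrac14$, or $\epsilon < \vert L_\tau \vert < 1$, in which case $f(\Q_{\tau+1}) - f(\Q_\tau) \leq -\tfrac14 L_\tau^2 < -\tfrac14\epsilon^2$. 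In both cases the decrease is at least $\tfrac14\min\{1,\epsilon^2\} = \tfrac14 \cdot \tfrac{\epsilon^2}{1+\epsilon^2}\cdot\tfrac{1+\epsilon^2}{\max\{1,\epsilon^2\}/\epsilon^2}$; it is cleaner to just say the decrease is at least $\tfrac14\min\{1,\epsilon^2\} \geq \tfrac{1}{4(1+\epsilon^{-2})}$, using $\min\{1,\epsilon^2\} = \tfrac{\epsilon^2}{\max\{1,\epsilon^2\}} \geq \tfrac{\epsilon^2}{1+\epsilon^2} = \tfrac{1}{1+\epsilon^{-2}}$.

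Next I would sum this bound over $\tau = 0,\dots,t-1$ to get
\begin{align*}
f(\Q_t) - f(\Q_0) = \sum_{\tau=0}^{t-1}\big(f(\Q_{\tau+1}) - f(\Q_\tau)\big) \leq -\frac{t}{4(1+\epsilon^{-2})}.
\end{align*}
Rearranging and using $f(\Q_t) \geq f(\Q^*)$ gives $t \leq 4(1+\epsilon^{-2})\big(f(\Q_0) - f(\Q_t)\big) \leq 4(f(\Q_0) - f(\Q^*))(1+\epsilon^{-2})$. This contradicts the assumed lower bound $t \geq \lceil 4(f(\Q_0) - f(\Q^*))(1+\epsilon^{-2})\rceil$, so there must exist some $\tau \in \{0,\dots,t-1\}$ with $\vert L_\tau \vert \leq \epsilon$, which is exactly the claim.

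The only things I need to be a little careful about: (i) that $f(\Q^*)$ is indeed a finite lower bound for $f(\Q_t)$ — this follows from the Theorem attributed to \cite{wiesel2015structured} (existence/uniqueness of the optimum under Assumption~\ref{ass:TME}) together with feasibility of the iterates (Lemma~\ref{lem:feas}), and the finiteness of $f$ on $\mS_{p+}$ noted before Lemma~\ref{lem:lowEig}; (ii) the elementary inequality $\min\{1,\epsilon^2\} \geq \tfrac{1}{1+\epsilon^{-2}}$, which holds with equality when $\epsilon \leq 1$ and is a strict inequality otherwise. Neither of these is a real obstacle — this corollary is a routine telescoping consequence of Lemma~\ref{lem:errorReduce}, and I would expect the write-up to be only a few lines. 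If anything, the mild bookkeeping step is matching the constant $4(1+\epsilon^{-2})$ in the statement to the per-step decrease $\tfrac14\min\{1,\epsilon^2\}$; choosing to phrase the decrease as $\geq \tfrac{1}{4(1+\epsilon^{-2})}$ from the outset avoids any friction there.
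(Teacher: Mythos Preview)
Your proposal is correct and follows essentially the same telescoping argument as the paper: sum the per-iteration decrease from Lemma~\ref{lem:errorReduce}, bound below by $f(\Q^*)$, and conclude by contradiction that some $\vert L_\tau\vert \leq \epsilon$. One tiny slip: the inequality $\min\{1,\epsilon^2\} \geq \tfrac{1}{1+\epsilon^{-2}}$ is actually strict for every $\epsilon>0$ (not equality when $\epsilon\leq 1$), which is in fact what makes the contradiction with $t \geq \lceil 4(f(\Q_0)-f(\Q^*))(1+\epsilon^{-2})\rceil$ go through cleanly at the boundary.
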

\begin{proof}
Fix some iteration $t$ of Algorithm \ref{alg:cap}.
Using Lemma \ref{lem:errorReduce} we have that
\begin{align*}
f(\Q^*) - f(\Q_0) \leq f(\Q_t) - f(\Q_0) = \sum_{\tau=0}^{t-1}f(\Q_{\tau+1}) - f(\Q_{\tau}) \leq - \frac{1}{4}\sum_{\tau=0}^{t-1}\min\{1,L_{\tau}^2\}.
\end{align*}
Thus, for $t\geq \left\lceil{4(f(\Q_0) - f(\Q^*))\left({1+\epsilon^{-2}}\right)}\right\rceil$ iterations there must exist some $\tau \in \{0,\dots,t-1\}$ such that $\vert{L_{\tau}}\vert \leq \epsilon$.
\end{proof}

\section{Linear Convergence of AFW and GAFW}\label{sec:linear}
In this section we prove that under an additional (to Assumption \ref{ass:TME}) mild assumption, the AFW and GAFW variants converge linearly in function value.
\begin{assumption}\label{ass:linear}
The data-points $\x_1,\dots,\x_n$ satisfy that $n\geq 2p$, and for any subset $\mS\subseteq\{\x_1,\dots,\x_n\}$ such that $\vert{\mS}\vert \geq n/2$, it holds that $\textrm{span}(\mS) = \reals^p$.
\end{assumption}
\begin{remark}
Note that for any fixed $n$, if the (normalized to have unit norms) data-points are i.i.d. samples from a continuous distribution which is supported on the entire unit sphere, Assumption \ref{ass:linear} holds with probability $1$.
\end{remark}

\begin{theorem}[Linear convergence of AFW and GAFW]\label{thm:linear}
Denote $h_t = f(\Q_t) - f(\Q^*)$ for all $t\geq 0.$ Suppose Assumption $\ref{ass:linear}$ holds. Then, Algorithm \ref{alg:cap} when run with AFW steps, satisfies
\begin{align*}
\forall t\geq 0: \quad h_t\leq h_0\exp\left({- \frac{(1-\beta)^2}{4}\kappa_0^{-2}\rho\left({t - \lceil{4(f(\Q_0) - f(\Q^*))}\rceil}\right)}\right),
\end{align*}
where $\rho > 0$ is the constant implied by Theorem \ref{thm:rho}, and $\kappa_0 := \max_{\Q\in\mS_p: f(\Q) \leq f(\Q_0)}\frac{\lambda_{\max}(\Q)}{\lambda_{\min}(\Q)}$.

When using GAFW steps, Algorithm \ref{alg:cap} satisfies 
\begin{align*}
\forall t\geq 0: \quad h_t \leq h_0\exp\left({- \frac{(1-\beta)^2}{4}\rho\left({t - \lceil{4(f(\Q_0) - f(\Q^*))}\rceil}\right)}\right).
\end{align*}
\end{theorem}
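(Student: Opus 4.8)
The plan is to combine the per-iteration decrease from Lemma~\ref{lem:errorReduce} with a quadratic-growth-type inequality that lower-bounds the quantity $L_t^2$ (or its geodesic analogue) in terms of the optimality gap $h_t$. Concretely, recall that $L_t = \frac{\v_t^\top\nabla f(\Q_t)\v_t}{\v_t^\top\Q_t^{-1}\v_t}$, and Lemma~\ref{lem:errorReduce} gives $h_{t+1} - h_t \le -\tfrac14\min\{1,L_t^2\}$. The key auxiliary result, which I expect is stated earlier as Theorem~\ref{thm:rho}, should assert that under Assumption~\ref{ass:linear} there is a constant $\rho>0$ such that the geodesic gradient norm is bounded below by $\sqrt{\rho\, h_t}$ whenever $f(\Q_t)\le f(\Q_0)$, i.e. $\Vert \Q_t^{1/2}\nabla f(\Q_t)\Q_t^{1/2}\Vert_2 \ge \sqrt{\rho\, h_t}$; this is the positive-semidefinite-cone analogue of the away-step geometry that yields linear rates over polytopes. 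First I would invoke the decrease lemma to conclude via Corollary~\ref{corr:sublinRate} that after $T_0 := \lceil 4(f(\Q_0)-f(\Q^*))\rceil$ iterations we reach a point where $|L_\tau| \le 1$ for some $\tau \le T_0$, and since the method is a descent method (Remark after Lemma~\ref{lem:errorReduce}), from that point on $h_t$ is small enough that the $\min\{1,L_t^2\}$ in Lemma~\ref{lem:errorReduce} is governed by the $L_t^2$ branch; this handles the ``$t - \lceil 4(f(\Q_0)-f(\Q^*))\rceil$'' shift appearing in the theorem.

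For the GAFW case, the step in Eq.~\eqref{eq:GAFWstep} guarantees $\frac{|\v_t^\top\nabla f(\Q_t)\v_t|}{\v_t^\top\Q_t^{-1}\v_t} \ge (1-\beta)\Vert \Q_t^{1/2}\nabla f(\Q_t)\Q_t^{1/2}\Vert_2$, so $|L_t| \ge (1-\beta)\sqrt{\rho\,h_t}$, hence $h_{t+1} - h_t \le -\tfrac14 (1-\beta)^2\rho\, h_t$ once $|L_t|\le 1$, i.e. $h_{t+1} \le \bigl(1 - \tfrac{(1-\beta)^2}{4}\rho\bigr)h_t$; iterating and using $1-x \le e^{-x}$ yields the claimed $h_t \le h_0\exp\!\bigl(-\tfrac{(1-\beta)^2}{4}\rho(t-T_0)\bigr)$. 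For the AFW case, the step in Eq.~\eqref{eq:AFWstep} only controls $|\v_t^\top\nabla f(\Q_t)\v_t| \ge p(1-\beta)\Vert \nabla f(\Q_t)\Vert_2$ in the \emph{Euclidean} metric, so I would relate the Euclidean gradient norm to the geodesic one: using $\v_t^\top\Q_t^{-1}\v_t \le p\,\lambda_{\min}(\Q_t)^{-1}$ and $\Vert\Q_t^{1/2}\nabla f(\Q_t)\Q_t^{1/2}\Vert_2 \le \lambda_{\max}(\Q_t)\Vert\nabla f(\Q_t)\Vert_2$, one gets $|L_t| \ge \frac{(1-\beta)\Vert\nabla f(\Q_t)\Vert_2}{\lambda_{\min}(\Q_t)^{-1}} \ge (1-\beta)\frac{\lambda_{\min}(\Q_t)}{\lambda_{\max}(\Q_t)}\Vert\Q_t^{1/2}\nabla f(\Q_t)\Q_t^{1/2}\Vert_2 \ge (1-\beta)\kappa_0^{-1}\sqrt{\rho\,h_t}$, where $\kappa_0$ is the maximal condition number over the level set (finite by Lemma~\ref{lem:lowEig} together with the trace constraint). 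This introduces the extra $\kappa_0^{-2}$ factor, and the same geometric-decay argument then finishes the AFW bound.

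The main obstacle is establishing — or, here, correctly invoking — the gradient-domination / quadratic-growth inequality $\Vert\Q_t^{1/2}\nabla f(\Q_t)\Q_t^{1/2}\Vert_2^2 \ge \rho\, h_t$ encapsulated by Theorem~\ref{thm:rho}; this is where Assumption~\ref{ass:linear} (every majority subset spans $\reals^p$) must be used, presumably to show the geodesic Hessian of $f$ is uniformly positive definite on the level set, which combined with geodesic convexity of $f$ gives the Polyak–Łojasiewicz-type bound. Given that theorem, the remaining work is the bookkeeping above: confirming the descent property keeps all iterates in the level set so that $\kappa_0$ and $\rho$ apply, handling the transition from the $\min\{1,L_t^2\}$ regime to the pure-$L_t^2$ regime (the source of the $T_0$ shift), and the two elementary inequalities $1-x\le e^{-x}$ and the metric comparison between Euclidean and geodesic gradient norms for the AFW factor $\kappa_0^{-2}$.
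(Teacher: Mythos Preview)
Your approach is essentially the same as the paper's: combine Lemma~\ref{lem:errorReduce} with the PL inequality of Theorem~\ref{thm:rho}, relate $|L_t|$ to $\Vert \Q_t^{1/2}\nabla f(\Q_t)\Q_t^{1/2}\Vert_2$ (directly for GAFW, via the metric comparison $\lambda_{\min}(\Q_t)\Vert\nabla f(\Q_t)\Vert_2 \ge \kappa(\Q_t)^{-1}\Vert \Q_t^{1/2}\nabla f(\Q_t)\Q_t^{1/2}\Vert_2$ for AFW), and iterate with $1-x\le e^{-x}$.

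There is one point where your reasoning, as written, does not go through: your treatment of the $T_0$ shift. You claim that once some iterate has $|L_\tau|\le 1$, then ``from that point on $h_t$ is small enough that the $\min\{1,L_t^2\}$ \dots is governed by the $L_t^2$ branch.'' But there is no upper bound on $|L_t|$ in terms of $h_t$---the PL inequality only gives a \emph{lower} bound---so nothing prevents $|L_t|>1$ recurring at later iterations. The paper's bookkeeping is slightly different and does work: from Lemma~\ref{lem:errorReduce}, every iteration with $L_\tau^2>1$ decreases $f$ by at least $1/4$, so the \emph{total number} of such iterations over the entire run is at most $\lceil 4h_0\rceil$. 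Hence out of any $t$ iterations, at least $t-\lceil 4h_0\rceil$ satisfy the contraction $h_{\tau+1}\le h_\tau(1-\tfrac{(1-\beta)^2}{4}\kappa_0^{-2}\rho)$, and on the remaining ones $h_{\tau+1}\le h_\tau$ by the descent property; chaining these gives the stated bound. This is a small fix, but the argument you wrote would not close as stated.
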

\begin{remark}
As with Theorem \ref{thm:sublin}, we see that GAFW enjoys better conditioning than AFW w.r.t. the maximal condition number $\lambda_{\max}(\Q)/\lambda_{\min}(\Q)$ over the initial level set.
\end{remark}

The complete proof of Theorem \ref{thm:linear} is given in the appendix.  The key part in the proof is to establish that, under Assumption \ref{ass:linear}, Problem \eqref{eq:TMEprob} satisfies a Polyak-{\L}ojasiewicz (PL) condition w.r.t. both the standard gradient $\nabla{}f(\Q)$ and the geodesic gradient $\Q^{1/2}\nabla{}f(\Q)\Q^{1/2}$, which is a property well-known to facilitate linear convergence rates for first-order methods, see for instance \cite{necoara2019linear, karimi2016linear, garber2015faster}. A (standard, i.e., non geodesic) PL condition at some query point $\Q$ takes the form $\Vert{\nabla{}f(\Q)}\Vert^2 \geq C(f(\Q) - f(\Q^*))$, for some constant $C>0$ independent of $\Q$.

The proof of our PL condition is highly non-trivial and is very much inspired by the analysis in \cite{franks2020rigorous}. However, while the proof in \cite{franks2020rigorous} relies heavily on \textit{geodesic strong convexity} and \textit{quantum expansion} arguments, we give a different proof (in particular, our Assumption \ref{ass:linear} is different from their explicit assumption that the data is sampled from an elliptical distribution) that does not use such considerations and, we believe, is more straightforward and accessible.



\begin{theorem}[Polyak-{\L}ojasiewicz condition]\label{thm:rho}
Suppose Assumption \ref{ass:linear} holds, and let $\Q_0\in\mS_{p+}$. Then, there exists a constant $\rho > 0$ such that for any $\Q\in\mS_{p+}$ satisfying $f(\Q) \leq f(\Q_0)$, it holds that,
\begin{align*}
\Vert{\Q^{1/2}\nabla{}f(\Q)\Q^{1/2}}\Vert_2^2 \geq \rho\left({f(\Q) - f(\Q^*)}\right).
\end{align*}
\end{theorem}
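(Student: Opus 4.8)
The plan is to establish the PL inequality in the geodesic gradient by a compactness-plus-contradiction argument, reducing it to showing that the geodesic gradient cannot vanish except at $\Q^*$, together with a quantitative lower bound that survives as $\Q$ approaches the relative boundary of the feasible set (where $\lambda_{\min}(\Q)\to 0$). First I would record the structure of the geodesic gradient: writing $\u_i = \Q^{-1/2}\x_i/\Vert{\Q^{-1/2}\x_i}\Vert$, a direct computation from \eqref{eq:grad} gives $\Q^{1/2}\nabla f(\Q)\Q^{1/2} = \I - \frac{p}{n}\sum_{i=1}^n \u_i\u_i^{\top}$, so its spectral norm measures how far the unit vectors $\{\u_i\}$ are from being "isotropic." Note $\sum_i \Vert{\u_i}\Vert^2 = n$ and $\trace(\I - \frac p n\sum_i\u_i\u_i^\top)=0$, so this matrix always has a nonpositive and a nonnegative eigenvalue; it is zero exactly at $\Q^*$ by Observation \ref{obsrv:FWconv} / the stationarity characterization. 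Simultaneously, I would use the known identity $f(\Q)-f(\Q^*) = $ (a Kullback–Leibler-type / Bregman divergence of $\log\det$ along the geodesic between $\Q^*$ and $\Q$); concretely $f(\Q) - f(\Q^*)$ can be written in terms of the eigenvalues of $\Q^{*-1/2}\Q\Q^{*-1/2}$ as $\sum_j(\log d_j) + \text{(the data term)}$, and one shows it is bounded above by something like $c\cdot\Vert{\log(\Q^{*-1/2}\Q\Q^{*-1/2})}\Vert_F^2$ on the level set (using that the level set is compact in the interior by Lemma \ref{lem:lowEig}, so all eigenvalue ratios are bounded).

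The core of the argument is the following claim: there is $\rho'>0$ such that $\Vert{\Q^{1/2}\nabla f(\Q)\Q^{1/2}}\Vert_2^2 \geq \rho'\,\Vert{\log(\Q^{*-1/2}\Q\Q^{*-1/2})}\Vert_F^2$ for all $\Q$ in the level set. Combined with the upper bound on $h(\Q)=f(\Q)-f(\Q^*)$ in terms of the same Frobenius quantity, this yields the theorem. I would prove the claim by a two-regime split. In the "far" regime, where $\Vert{\log(\Q^{*-1/2}\Q\Q^{*-1/2})}\Vert_F \geq r$ for a fixed $r$: here the level set intersected with $\{\text{dist}\geq r\}$ is still compact and disjoint from $\Q^*$, so by continuity $\Vert{\Q^{1/2}\nabla f(\Q)\Q^{1/2}}\Vert_2$ has a positive minimum there (it can't vanish since $\nabla f(\Q)=0 \iff \Q=\Q^*$), and since $\Vert{\log(\cdot)}\Vert_F$ is bounded above on the level set, the ratio is bounded below. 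In the "near" regime, $\Q\to\Q^*$, I would do a quantitative linearization: as $\Q=\Q^*+\Delta$ with $\Delta$ small, $\Q^{1/2}\nabla f(\Q)\Q^{1/2}$ is (to first order) a linear operator $\mathcal{H}$ applied to $\Delta$ (the geodesic Hessian of $f$ at $\Q^*$ restricted to the trace-zero subspace), and the needed estimate becomes: $\mathcal{H}$ is positive definite on the trace-zero tangent space. This is exactly where Assumption \ref{ass:linear} enters — positivity of this Hessian is equivalent to saying no nonzero trace-zero $\Delta$ is annihilated, which translates (after diagonalizing $\Q^*$) to a spanning/non-degeneracy condition on the $\{\x_i\}$, and the "$\geq n/2$ subset spans $\reals^p$" hypothesis is what guarantees strict positivity with a uniform gap. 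This is the key non-degeneracy input; it plays the role that geodesic strong convexity plays in \cite{franks2020rigorous}.

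The main obstacle I anticipate is making the "near" regime genuinely quantitative and uniform rather than merely asymptotic — i.e. producing an explicit neighborhood of $\Q^*$ and an explicit $\rho'$ on it, controlling the second-order remainder in the expansion of the geodesic gradient uniformly over the (compact) level set, and verifying that the positivity of the limiting Hessian derived from Assumption \ref{ass:linear} indeed comes with a dimension-free-in-$\Delta$ constant. A clean way around the explicit-neighborhood bookkeeping is to run the entire claim as a single compactness/contradiction argument on the whole level set at once: suppose no $\rho'$ works, extract a sequence $\Q_k$ with $\Vert{\Q_k^{1/2}\nabla f(\Q_k)\Q_k^{1/2}}\Vert_2^2/\Vert{\log(\cdot)}\Vert_F^2 \to 0$; by compactness of the level set (Lemma \ref{lem:lowEig}) pass to a limit $\bar\Q$; if $\bar\Q\neq\Q^*$ we contradict continuity and $\nabla f(\bar\Q)\neq 0$; if $\bar\Q=\Q^*$, rescale the increments $\Delta_k/\Vert{\Delta_k}\Vert \to \bar\Delta$ (a unit trace-zero direction) and derive that $\bar\Delta$ lies in the kernel of the geodesic Hessian at $\Q^*$, contradicting the strict positive-definiteness forced by Assumption \ref{ass:linear}. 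That packaging avoids ever writing an explicit $\rho$, at the cost of a non-constructive constant — which matches how $\rho$ is used (as an unspecified positive constant) in Theorem \ref{thm:linear}.
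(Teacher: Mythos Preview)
Your approach is valid but genuinely different from the paper's. The paper does \emph{not} split into near/far regimes or argue by contradiction; instead it proves a uniform geodesic strong-convexity bound along the \emph{entire} geodesic from $\Q$ to $\Q^*$. Concretely, it shows (Lemma \ref{lem:strongconvineq}) that for any $\Z\succ 0$ and any trace-zero $\W$ with $\Vert\W\Vert_1=1$, the function $g(t)=f(\Z^{1/2}\exp(t\W)\Z^{1/2})$ satisfies $g''(t)\ge \kappa(\Z)^{-2}p^{-1}e^{-4t}\alpha$, where $\alpha>0$ is the constant from the auxiliary Lemma \ref{lem:alphaxi}: under Assumption \ref{ass:linear}, $\tfrac1n\sum_i(\u^\top\x_i)^2(\v^\top\x_i)^2\ge\alpha$ for all unit $\u,\v$. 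Integrating $g''$ twice along the geodesic from $\Q$ (normalized to unit determinant) to $\Q^*$ and using $g'(0)=\langle\Q^{1/2}\nabla f(\Q)\Q^{1/2},\W\rangle\ge-\Vert\Q^{1/2}\nabla f(\Q)\Q^{1/2}\Vert_2$ gives directly $\Vert\Q^{1/2}\nabla f(\Q)\Q^{1/2}\Vert_2^2\ge 2\kappa(\Q)^{-2}p^{-1}e^{-4D}\alpha\,(f(\Q)-f(\Q^*))$ with $D=\Vert\log(\Q^{*-1/2}\Q\Q^{*-1/2})\Vert_1$; compactness of the level set (Lemma \ref{lem:lowEig}) then bounds $\kappa$ and $D$ uniformly.

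Your route trades this global second-derivative estimate for a local one (Hessian positivity only at $\Q^*$) plus compactness, which is conceptually lighter. However, the step you flag as ``where Assumption \ref{ass:linear} enters'' --- verifying that the geodesic Hessian at $\Q^*$ is positive definite on trace-zero directions --- still requires essentially the paper's Lemma \ref{lem:alphaxi} together with the $t=0$ case of the $g''$ computation, so there is no real saving on the technical core. What you gain is avoiding the bookkeeping of the $e^{-4t}$ factor and the integration; what you lose is any explicit handle on $\rho$ (the paper's argument, while still yielding an intricate constant, at least produces a formula in terms of $\alpha$, $\kappa$, and the geodesic diameter of the level set).
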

\begin{remark}
While Theorem \ref{thm:rho} establishes that the PL parameter $\rho$ is strictly positive, its precise value (i.e., its dependence on the data) is quite intricate and does not admit a simple formula (see proof in appendix for more details).
\end{remark}

\section{Numerical Simulations}
In order to give some demonstration for the empirical performance of our Frank-Wolfe-based algorithms, we conducted two types of experiments that closely follow those in \cite{wiesel2015structured} (Chapter 3) with some minor changes, which consider a Gaussian distribution with outlier contamination, and a heavy-tailed multivariate t-distribution. We consider a large sample regime in which $n = p^2$. We recall that in this regime each iteration of the Fixed-point Iterations method (FPI) takes $O(np^2)$ time, while our AFW, GAFW variants require only $O(np)$ time, up to a single log term, per-iteration.

\paragraph*{Data generation:} 
Following the experiments conducted in \cite{wiesel2015structured} with some minor changes, we consider i. a Gaussian distribution with outliers contamination in which, each Gaussian-distributed vector is replaced with probability $0.9/p$ with the eigenvector associated with the smallest eigenvalue of the covariance, and ii. a heavy-tailed multivariate t-distribution with two degrees of freedom. In both experiments we set $p=50, n=2500$, and we take the true unknown covariance to be a Toeplitz matrix with the elements $\Q_{i,j}=0.85^{\vert{i-j}\vert}$. 

\paragraph*{Methodology:} To present results that are implementation and scale independent as possible, for each of the considered methods we estimate the required running time per number of iterations, normalized by the data-size $np$.  Since we consider the regime $n=p^2$, each iteration of FPI takes $O(np^2)$ time. For all Frank-Wolfe variants (FW, AFW, and GAFW) we use Python's \textsc{scipy.sparse.linalg.eigsh} procedure, which is based on the Lanczos algorithm, to solve the corresponding eigenevalue problem to low-accuracy (which corresponds to the approximation parameter $\beta$ in Algorithm \ref{alg:cap}). We verified that in all of our experiments and for all FW variants, this procedure makes at most 2 iterations, and thus, per the discussion in Section \ref{sec:implement},  the runtime per iteration of each of these variants is estimated by $O(np)$. Thus, normalizing the estimated runtime by the data-size $np$, in the figures below each iteration of FPI is estimated to take $p$ times more than that of any of the FW variants. All methods are initialized from the sample covariance (normalized to have trace equals $p$) and each figure is the average of 20 i.i.d. experiments.



\paragraph*{Results:} For all experiments we compute Tyler's estimator to high accuracy by running 250 iterations of the Fixed-point  method and we denote the resulting matrix by $\Q^*$. In Figure \ref{fig:results2} we report the distance in spectral norm (in log scale) of the iterates of the different methods form $\Q^*$, and in Figure \ref{fig:results3} we report the approximation error $f(\Q_t) - f(\Q^*)$ of the iterates (also in log scale). It can be seen that in both setups and with respect to both measures, GAFW converges faster than FPI, and that in the contaminated Gaussian distribution setup it is significantly faster. Moreover, looking at the approximation error in log-scale, it indeed seems to exhibit a linear convergence rate. We also observe that the FW and AFW variants converge significantly slower than GAFW, which demonstrates how the better conditioning of GAFW (as captured also in our convergence theorems, Theorems \ref{thm:sublin} and \ref{thm:linear}) may be significant. Additionally, and perhaps surprisingly, AFW converges slower than FW, which suggests a situation in which AFW takes many \textit{away-steps}, since they are better descent directions than the standard FW step, but in turn results in much smaller step-sizes, and thus overall, the convergence is slower.   
\begin{figure}[h!]
	\centering
	\begin{minipage}[b]{0.370\textwidth}
		\includegraphics[width=\textwidth]{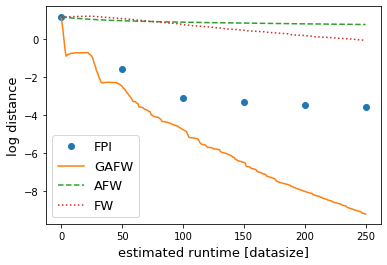}
	\end{minipage}
	\begin{minipage}[b]{0.37\textwidth}
		\includegraphics[width=\textwidth]{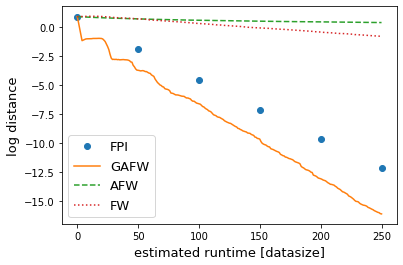}
	\end{minipage}
  \caption{$\log\Vert{\Q_t-\Q^*}\Vert_2$ for Gaussian distribution with outlier contamination (left panel) and for heavy-tailed t-distribution (right panel).} \label{fig:results2}
\end{figure} 

\begin{figure}[h!]
	\centering
	\begin{minipage}[b]{0.370\textwidth}
		\includegraphics[width=\textwidth]{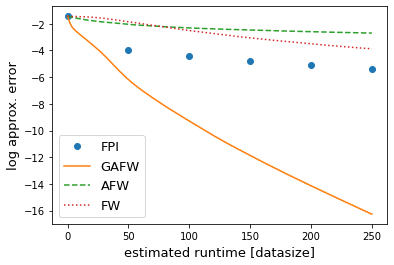}
	\end{minipage}
	\begin{minipage}[b]{0.37\textwidth}
		\includegraphics[width=\textwidth]{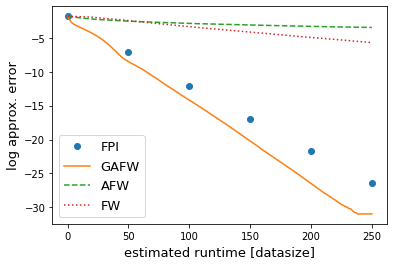}
	\end{minipage}
  \caption{Log approximation error $f(\Q_t)-f(\Q^*)$ for Gaussian distribution with outlier contamination (left panel) and for heavy-tailed t-distribution (right panel).} \label{fig:results3}
\end{figure}

\section{Conclusions}
We have presented the first Frank-Wolfe-based variants for approximating Tyler's M-estimator for robust and heavy-tailed covariance estimation. In particular these include parameter-free and globally-convergent variants with nearly linear runtime per-iteration and, under a mild assumption, with linear convergence rates, despite the fact that the underlying optimization problem is not convex nor smooth. We hope these results will pave the way to nearly linear-time algorithms for additional highly-structured nonconvex and nonsmooth problems.
\bibliographystyle{plain}
\bibliography{bibs.bib}


\appendix

\section{Proofs Missing from Section \ref{sec:alg}}

\subsection{Proof of Lemma \ref{lem:feas}}

Before proving the lemma we need a simple observation.
\begin{observation}\label{observ:aux}
For any $\Q\in\mS_{p+}$ and any vector $\v\in\reals^p$ such that $\Vert{\v}\Vert = \sqrt{p}$, we have that
\begin{align*}
(\v^{\top}\Q^{-1}\v)^2 - \v^{\top}\nabla{}f(\Q)\v > 0.
\end{align*}
\end{observation}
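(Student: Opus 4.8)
\textbf{Proof proposal for Observation \ref{observ:aux}.}

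The plan is to expand the quadratic form $\v^\top\nabla f(\Q)\v$ using the explicit formula \eqref{eq:grad} for the gradient, and then bound it from above by $(\v^\top\Q^{-1}\v)^2$ by a Cauchy--Schwarz-type argument. First I would write, using \eqref{eq:grad},
\begin{align*}
\v^\top\nabla f(\Q)\v = -\frac{p}{n}\sum_{i=1}^n\frac{(\x_i^\top\Q^{-1}\v)^2}{\x_i^\top\Q^{-1}\x_i} + \v^\top\Q^{-1}\v,
\end{align*}
so that the quantity to be bounded becomes
\begin{align*}
(\v^\top\Q^{-1}\v)^2 - \v^\top\nabla f(\Q)\v = (\v^\top\Q^{-1}\v)^2 - \v^\top\Q^{-1}\v + \frac{p}{n}\sum_{i=1}^n\frac{(\x_i^\top\Q^{-1}\v)^2}{\x_i^\top\Q^{-1}\x_i}.
\end{align*}
Since the last sum is nonnegative, it suffices to understand the sign of $(\v^\top\Q^{-1}\v)^2 - \v^\top\Q^{-1}\v = \v^\top\Q^{-1}\v\,(\v^\top\Q^{-1}\v - 1)$. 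This is positive precisely when $\v^\top\Q^{-1}\v > 1$ and could be negative when $\v^\top\Q^{-1}\v < 1$, so a slightly more careful accounting of the sum is needed in that regime.

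The key additional ingredient is that $\trace(\Q) = p$ together with $\Vert\v\Vert^2 = p$ forces a useful lower bound on the sum. Concretely, I would exploit the fact that each $\x_i$ has unit norm and that the identity $\sum_i$ of the rank-one terms relates back to $\v^\top\Q^{-1}\v$. The cleanest route is the Cauchy--Schwarz inequality applied to the sum: writing $a_i = \frac{\x_i^\top\Q^{-1}\v}{\sqrt{\x_i^\top\Q^{-1}\x_i}}$ and $b_i = \sqrt{\x_i^\top\Q^{-1}\x_i}$, we have $\sum_i a_i b_i = \sum_i \x_i^\top\Q^{-1}\v$, but this is not quite in the right form. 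Instead I would use that, for any nonzero $\x$, $\frac{(\x^\top\Q^{-1}\v)^2}{\x^\top\Q^{-1}\x} \le \v^\top\Q^{-1}\v$ (this is Cauchy--Schwarz in the $\Q^{-1}$-inner product), and apply it together with a matching lower bound obtained by a different grouping; alternatively, observe that $\x_i^\top\Q^{-1}\x_i \le \lambda_{\min}(\Q)^{-1}\Vert\x_i\Vert^2 = \lambda_{\min}(\Q)^{-1} \le p\cdot\lambda_{\min}(\Q)^{-1}$ and similarly control the numerator. The point I expect to use decisively is that since $\trace(\Q)=p$ and all $\x_i$ are unit vectors, one cannot have $\v^\top\Q^{-1}\v$ too small without the sum $\frac{p}{n}\sum_i\frac{(\x_i^\top\Q^{-1}\v)^2}{\x_i^\top\Q^{-1}\x_i}$ compensating.

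The main obstacle is precisely handling the regime $\v^\top\Q^{-1}\v < 1$, where the ``free'' term $(\v^\top\Q^{-1}\v)^2 - \v^\top\Q^{-1}\v$ is negative and one must extract enough positivity from the sum $\frac{p}{n}\sum_i\frac{(\x_i^\top\Q^{-1}\v)^2}{\x_i^\top\Q^{-1}\x_i}$. I expect the resolution to come from a weighted Cauchy--Schwarz / Jensen argument: treating $w_i = \frac{p}{n}\frac{1}{\x_i^\top\Q^{-1}\x_i}$ as weights (which do not sum to one, but whose behaviour is controlled by $\trace(\Q^{-1})$ via $\sum_i \x_i^\top\Q^{-1}\x_i$-type sums), and comparing $\sum_i w_i (\x_i^\top\Q^{-1}\v)^2$ against $(\sum_i w_i (\x_i^\top\Q^{-1}\v))^2$ or against $\v^\top\Q^{-1}\v$ directly. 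Once the sum is shown to be at least $\v^\top\Q^{-1}\v - (\v^\top\Q^{-1}\v)^2$ when the latter is positive (and trivially the inequality holds otherwise), we are done; I anticipate that the strict inequality (as opposed to ``$\ge 0$'') follows because $\v\neq 0$ forces at least one numerator $(\x_i^\top\Q^{-1}\v)^2$ to be strictly positive, using that the $\x_i$ span $\reals^p$ under Assumption \ref{ass:TME}.
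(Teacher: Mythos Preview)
Your decomposition is correct and matches the paper's first step exactly: using \eqref{eq:grad} one gets
\[
(\v^\top\Q^{-1}\v)^2 - \v^\top\nabla f(\Q)\v = (\v^\top\Q^{-1}\v)^2 - \v^\top\Q^{-1}\v + \frac{p}{n}\sum_{i=1}^n\frac{(\x_i^\top\Q^{-1}\v)^2}{\x_i^\top\Q^{-1}\x_i},
\]
and the sum is nonnegative. Where you go astray is in the very next sentence: you assume there is a regime $\v^\top\Q^{-1}\v < 1$ that must be handled by extracting positivity from the sum, and the remainder of your proposal is an (inconclusive) attempt to do this via Cauchy--Schwarz/Jensen arguments on the data-dependent terms.

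That regime is empty. The constraints $\Q\in\mS_{p+}$ and $\Vert\v\Vert=\sqrt{p}$ already force $\v^\top\Q^{-1}\v>1$: since $\Q\succ 0$ and $\trace(\Q)=p$, all eigenvalues of $\Q$ are strictly positive and sum to $p$, hence $\lambda_{\max}(\Q)<p$, and therefore
\[
\v^\top\Q^{-1}\v \;\ge\; \Vert\v\Vert^2\,\lambda_{\min}(\Q^{-1}) \;=\; \frac{p}{\lambda_{\max}(\Q)} \;>\; \frac{p}{p} \;=\; 1.
\]
With this in hand, $(\v^\top\Q^{-1}\v)^2-\v^\top\Q^{-1}\v>0$ and the nonnegative sum can simply be dropped; the observation follows immediately. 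This is exactly the paper's argument. You had all the right ingredients (you even wrote ``$\trace(\Q)=p$ together with $\Vert\v\Vert^2=p$ forces a useful lower bound'') but applied the eigenvalue bound to the $\x_i$-terms instead of to $\v^\top\Q^{-1}\v$ itself; that is the missing step.
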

\begin{proof}
Since $\Q \succ 0$, using the expression for $\nabla{}f(\Q)$ in Eq. \eqref{eq:grad}, we have that
\begin{align*}
(\v^{\top}\Q^{-1}\v)^2 - \v^{\top}\nabla{}f(\Q)\v  \geq (\v^{\top}\Q^{-1}\v)^2 - \v^{\top}\Q^{-1}\v.
\end{align*}
Thus, for the condition stated in the observation to hold true, it suffices that $\v^{\top}\Q^{-1}\v > 1$.

Since $\Vert{\v}\Vert = \sqrt{p}$ we have that 
\begin{align*}
\v^{\top}\Q^{-1}\v \geq p\lambda_{\min}(\Q^{-1}) = \frac{p}{\lambda_{\max}(\Q)} > \frac{p}{p},
\end{align*}
where the last inequality follows since $\trace(\Q) = p$ and $\Q \succ 0$, which imply that $\lambda_{\max}(\Q) < p$.

Thus, the observation is indeed correct.
\end{proof}

\begin{proof}[Proof of Lemma \ref{lem:feas}]
The fact that for all $t$ it holds that $\trace(\Q_t)=p$ follows immediately from the definition of $\Q_{t+1}$ in Algorithm \ref{alg:cap} and the fact that $\trace(\v_t\v_t^{\top}) = p$.

In order to show that for all $t$, $\Q_t \succ 0$, we consider two cases. First, suppose that $\v_t^{\top}\nabla{}f(\Q_t)\v_t < 0$. In this case we have from the definition of $\mu_t$ that $\mu_t\in[0,1]$. In particular, if $\Q_t \succ 0$ we have that $\mu_t\in[0,1)$ and so, by the definition of $\Q_{t+1}$ it follows that $\Q_{t+1} \succ 0$ as well.

For the second case in which $\v_t^{\top}\nabla{}f(\Q_t)\v_t > 0$, using the assumption that $\Q_t \succ 0$ and Observation \ref{observ:aux}, we first note that $\mu_t < 0$, and so $(1-\mu_t)\Q_t \succ 0$. Denoting $\A_t =(1-\mu_t)\Q_t$,  we make the following observations:
\begin{align*}
\A_t + \mu_t\v_t\v_t^{\top} \succ 0 &\Longleftrightarrow \A_t^{1/2}\left({\I + \mu_t\A_t^{-1/2}\v_t\v_t^{\top}\A_t^{-1/2}}\right)\A_t^{1/2} \succ 0 \\
&\Longleftrightarrow \I + \mu_t\A_t^{-1/2}\v_t\v_t^{\top}\A_t^{-1/2} \succ 0 \\
&\Longleftrightarrow \lambda_{\min}\left({\I + \mu_t\A_t^{-1/2}\v_t\v_t^{\top}\A_t^{-1/2}}\right) > 0 \\
&\Longleftrightarrow 1 + \mu_t\lambda_{\max}\left({\A_t^{-1/2}\v_t\v_t^{\top}\A_t^{-1/2}}\right) > 0  \qquad  \textrm{($\mu_t < 0$)}\\
&\Longleftrightarrow 1 + \mu_t\v_t^{\top}\A_t^{-1}\v_t > 0 \Longleftrightarrow 1 + \frac{\mu_t}{1-\mu_t}\v_t^{\top}\Q_t^{-1}\v_t > 0. 
\end{align*}
Note that $\frac{\mu_t}{1-\mu_t} = -\frac{\v_t^{\top}\nabla{}f(\Q_t)\v_t}{(\v_t^{\top}\Q_t^{-1}\v_t)^2}$, and so
\begin{align*}
1 + \frac{\mu_t}{1-\mu_t}\v_t^{\top}\Q_t^{-1}\v_t &= 1 - \frac{\v_t^{\top}\nabla{}f(\Q_t)\v_t}{\v_t^{\top}\Q_t^{-1}\v_t}\\
& = 1 + \frac{1}{\v_t^{\top}\Q_t^{-1}\v_t}\left({\frac{p}{n}\sum_{i=1}^n\frac{\v_t^{\top}\Q_t^{-1}\x_i\x_i^{\top}\Q_t^{-1}\v_t}{\x_i^{\top}\Q_t^{-1}\x_i} - \v_t^{\top}\Q_t^{-1}\v_t}\right) \\
& =  \frac{1}{\v_t^{\top}\Q_t^{-1}\v_t}\frac{p}{n}\sum_{i=1}^n\frac{\v_t^{\top}\Q_t^{-1}\x_i\x_i^{\top}\Q_t^{-1}\v_t}{\x_i^{\top}\Q_t^{-1}\x_i} =  \frac{1}{\v_t^{\top}\Q_t^{-1}\v_t}\frac{p}{n}\sum_{i=1}^n\frac{(\v_t^{\top}\Q_t^{-1}\x_i)^2}{\x_i^{\top}\Q_t^{-1}\x_i}.
\end{align*}
The RHS is non-negative and it is equal to zero if and only if for all $i=1,\dots,n$ it holds that $\x_i\perp\Q_t^{-1}\v_t$, however this implies that there exists a subspace of dimension $p-1$ containing all $n$ data-points, which is in contrast to Assumption \ref{ass:TME}.
\end{proof}

\subsection{Proof of Theorem  \ref{thm:fastEig}}

\begin{proof}
We first focus on the efficient implementation of the AFW step (Eq. \eqref{eq:AFWstep}) and then show how it relates to computing the GAFW step (Eq.  \eqref{eq:GAFWstep}). Then, we discuss the efficient implementation of the FW step (Eq. \eqref{eq:FWstep}).

Fix some iteration $t$ of Algorithm \ref{alg:cap}. We reduce the computation of $\v_t$ which satisfies  \eqref{eq:AFWstep} to three approximate leading eigenvalue computations. Fix some $\tilde{\beta}\in[0,1)$ to be determined later. First, we compute a unit vector $\u$ such that $\u^{\top}(\nabla{}f(\Q_t))^2\u \geq (1-\tilde{\beta})^2\lambda_1((\nabla{}f(\Q_t))^2) = (1-\tilde{\beta})^2\Vert{\nabla{}f(\Q_t)}\Vert_2^2$, and we compute $C = \sqrt{\u^{\top}(\nabla{}f(\Q_t))^2\u}$. Next we define the matrices $\M_+ = (1-\tilde{\beta})^{-1}C\I + \nabla{}f(\Q_t)$, $\M_- = (1-\tilde{\beta})^{-1}C\I - \nabla{}f(\Q_t)$. Note that by definition, both $\M_+,\M_-$ are positive semidefinite. Next we compute unit vectors $\u_+,\u_-$, which are approximate leading eigenvectors, with a factor $(1-\tilde{\beta})$ multiplicative approximation, of the matrices $\M_+, \M_-$, respectively. That is, 
\begin{align*}
\u_+^{\top}\M_+\u_+ &\geq (1-\tilde{\beta})\lambda_1(\M_+) = (1-\tilde{\beta})\left({(1-\tilde{\beta})^{-1}C + \lambda_1(\nabla{}f(\Q_t))}\right)\\
\u_-^{\top}\M_-\u_- &\geq (1-\tilde{\beta})\lambda_1(\M_-) = (1-\tilde{\beta})\left({(1-\tilde{\beta})^{-1}C - \lambda_p(\nabla{}f(\Q_t))}\right),
\end{align*}
which using the definition of $\M_+,\M_-$ implies that
\begin{align*}
\u_+^{\top}\nabla{}f(\Q_t)\u_+ &\geq  (1-\tilde{\beta})\lambda_1(\nabla{}f(\Q_t)) - \tilde{\beta}(1-\tilde{\beta})^{-1}C\\
-\u_-^{\top}\nabla{}f(\Q_t)\u_- &\geq  -(1-\tilde{\beta})\lambda_p(\nabla{}f(\Q_t)) - \tilde{\beta}(1-\tilde{\beta})^{-1}C.
\end{align*}
Thus,
\begin{align*}
\max\{\u_+^{\top}\nabla{}f(\Q_t)\u_+, -\u_-^{\top}\nabla{}f(\Q_t)\u_-\} &\geq (1-\tilde{\beta})\Vert{\nabla{}f(\Q_t)}\Vert_2 - \tilde{\beta}(1-\tilde{\beta})^{-1}C\\
&\underset{(a)}{\geq} \left({1-\tilde{\beta}-\frac{\tilde{\beta}}{1-\tilde{\beta}}}\right)\Vert{\nabla{}f(\Q_t)}\Vert_2 \\
&\geq \frac{1-3\tilde{\beta}}{1-\tilde{\beta}}\Vert{\nabla{}f(\Q_t)}\Vert_2,
\end{align*}
where in (a) we have used the fact that by definition $C \leq \Vert{\nabla{}f(\Q_t)}\Vert_2$.

Thus, setting $\tilde{\beta}$ such that $\frac{1-3\tilde{\beta}}{1-\tilde{\beta}} \geq 1-\beta$ and taking $\v_t = \sqrt{p}\cdot\arg\max_{\w\in\{\u_-,\u_+\}}\vert{\w^{\top}\nabla{}f(\Q_t)\w}\vert$, we obtain the result required by \eqref{eq:AFWstep}.

Since for a universal constant $\beta$, $\tilde{\beta}$ as defined above is also a universal constant, computing each approximated eigenvector  $\u, \u_+, \u_-$ using the well-known \textit{power method} with random initialization requires $O\left({\log\frac{p}{\delta}}\right)$ iterations, where $\delta$ is the desired failure probability, see for instance \cite{garber2015fast} (Theorem A.1). Each such iteration of the power method requires to compute a matrix-vector product with a matrix of the form $c\I \pm\nabla{}f(\Q_t)$, where $c$ is a given scalar. Thus, it remains to detail how to compute fast matrix-vector products with the gradient $\nabla{}f(\Q_t)$. Fix some vector $\v$. It holds that
\begin{align*}
\nabla{}f(\Q_t)\v = -\frac{p}{n}\sum_{i=1}^n\frac{\Q_t^{-1}\x_i\x_i^{\top}\Q_t^{-1}\v}{\x_i^{\top}\Q_t^{-1}\x_i} + \Q_t^{-1}\v.
\end{align*}

Note that if the vectors $\y_{t,i} := \Q_t^{-1}\x_i, i=1,\dots,n$ are stored explicitly in memory, and recalling that $\Q_t^{-1}$ is also computed and stored explicitly, computing $\nabla{}f(\Q_t)\v$ requires overall only $O(np + p^2)$ time. It thus remains to show how given the vectors $\y_{t,i}, i=1,\dots,n$, and the vector $\v_{t}$, we can quickly compute the new vectors $\y_{t+1,i}:=\Q_{t+1}^{-1}\x_i, i=1,\dots,n$.

Using the Sherman-Morrison formula we have that,
\begin{align*}
 \Q_{t+1}^{-1} =  ((1 - \mu_t)\Q_t + \mu_t \v_t\v_t^{\top})^{-1} = \frac{1}{1-\mu_t} \Big(\Q_t^{-1} - \gamma_t\frac{\Q_t^{-1} \v_t \v_t^{\top} \Q_t^{-1}}{1 + \gamma_t \v_t^{\top} \Q_t^{-1}\v_t}\Big),
\end{align*}
where we recall that $\gamma_t = \frac{\mu_t}{1-\mu_t}$.

Thus, for all $i=1,\dots,n$ we have that,
\begin{align*}
\y_{t+1,i} = \Q_{t+1}^{-1}\x_i = \frac{1}{1-\mu_t}\y_{t,i} - \frac{\gamma_t}{1-\mu_t}\frac{\Q_t^{-1}\v_t\v_t^{\top}\y_{t,i}}{1+\gamma_t\v_t^{\top}\Q_t^{-1}\v_t}.
\end{align*}
Thus, after computing $\Q_t^{-1}\v_t$, we can indeed compute $\y_{t+1,i}$ form $\y_{t,i}$ in $O(p)$ time, and overall $O(np+p^2)$ to compute all vectors $\y_{t+1,i}, i=1,\dots,n$.

We now turn to discuss the efficient computation of the GAFW step in Eq. \eqref{eq:GAFWstep}. As discussed in Section \ref{sec:alg}, computing $\v_t$ in this case amounts to finding a unit vector $\u$ such that $\vert{\u^{\top}\Q^{1/2}\nabla{}f(\Q_t)\Q^{1/2}\u}\vert \geq (1-\beta)\Vert{\Q_t^{1/2}\nabla{}f(\Q_t)\Q_t^{1/2}}\Vert_2$, and returning $\v_t = \sqrt{p}\frac{\Q_t^{1/2}\u}{\Vert{\Q_t^{1/2}\u}\Vert}$. Thus, given the matrix $\Q_t^{1/2}$ explicitly, computing such unit vector $\u$, and then the corresponding vector $\v_t$, could be carried out in time $\tilde{O}(p^2+np)$ using the same reasoning as that in the computation of the AFW step. In particular, each matrix-vector product of the form $\Q_t^{1/2}\nabla{}f(\Q_t)\Q_t^{1/2}\v$ could be carried out in $O(p^2+np)$ times as explained above. Thus, using additional $O(p^3)$ to explicitly compute the matrix $\Q_t^{1/2}$, we obtain the result of the theorem for the GAFW updates.

Finally, for the FW updates (Eq. \eqref{eq:FWstep}), we compute the constant $C$ and define the matrix $\M_-$ as before, but this time we compute the vector $\u_-$ (there is no need for $\u_+$ in the standard FW step) so that $\u_-^{\top}\M_-\u_- \geq (1-\hat{\beta})\lambda_1(\M_-)$, for some $\hat{\beta}\in[0,1)$, to be determined shortly. Similarly to the analysis for AFW, this yields,
\begin{align*}
\u_-^{\top}\nabla{}f(\Q_t)\u_- &\leq  (1-\hat{\beta})\lambda_p(\nabla{}f(\Q_t)) + \hat{\beta}(1-\tilde{\beta})^{-1}C \\
&\leq  (1-\hat{\beta})\lambda_p(\nabla{}f(\Q_t)) + \hat{\beta}(1-\tilde{\beta})^{-1}\Vert{\nabla{}f(\Q_t)}\Vert_2.
\end{align*}
Thus, choosing for instance $\tilde{\beta} = 1/2$ and $\hat{\beta} = \frac{\vert{\lambda_{p}(\nabla{}f(\Q_t))}\vert}{3\Vert{\nabla{}f(\Q_t)}\Vert_2}\beta$, we indeed have that $\u_-^{\top}\nabla{}f(\Q_t)\u_- \leq (1-\beta)\lambda_p(\nabla{}f(\Q_t))$, and so, returning $\v_t = \sqrt{p}\u_-$ indeed satisfies \eqref{eq:FWstep}.

Using standard results for the power method (see for instance Theorem A.1 in \cite{garber2015fast}), as discusses above, computing such $\u_-$ with failure probability at most $\delta$ requires $O\left({\frac{\Vert{\nabla{}f(\Q_t)}\Vert_2}{\vert{\lambda_p(\nabla{}f(\Q_t))}\vert\beta}\log(p\delta^{-1})}\right)$ matrix-vector products. This could be improved to only $O\left({\sqrt{\frac{\Vert{\nabla{}f(\Q_t)}\Vert_2}{\vert{\lambda_p(\nabla{}f(\Q_t))}\vert\beta}}\log(p\delta^{-1})}\right)$ matrix-vector products using the faster Lanczos method \cite{golub2013matrix, saad2011numerical, musco2015randomized}.
\end{proof}

\section{Proofs Missing from Section \ref{sec:sublin}}

\subsection{Proof of Observation \ref{obsrv:FWconv}}
\begin{proof}

We first establish that $\lambda_{\min}(\nabla{}f(\Q)) \leq 0$ and  $\lambda_{\min}(\nabla{}f(\Q)) = 0$ if and only if  $\Q=\Q^*$. Then, the observation follows since $\Q\succ 0$ and $\Q\nabla{}f(\Q)\Q = \Q - \frac{p}{n}\sum_{i=1}^n\frac{\x_i\x_i^{\top}}{\x_i^{\top}\Q^{-1}\x_i}$.

A straight-forward calculation shows that for any $\Q\in\mS_{p+}$ it holds that $\langle{\Q,\nabla{}f(\Q)}\rangle = 0$. Writing the eigen-decomposition of $\nabla{}f(\Q)$ as $\nabla{}f(\Q) = \sum_{i=1}^p\lambda_i\u_i\u_i^{\top}$, this implies that
\begin{align*}
0 = \langle{\Q,\nabla{}f(\Q)}\rangle = \sum_{i=1}^p\lambda_i\u_i^{\top}\Q\u_i.
\end{align*}
Since $\Q\in\mS_{p+}$ implies that for all $i=1,\dots,p$, $\u_i^{\top}\Q\u_i > 0$, it follows that $\lambda_p \leq 0$, and moreover, $\lambda_p =0$ if and only if $\lambda_i =0$ for all $i=1,\dots,p$, meaning $\nabla{}f(\Q) =0$, which holds if and only if $\Q$ satisfies \eqref{eq:TME}, i.e., $\Q = \Q^*$.
\end{proof}

\subsection{Proof of Theorem \ref{thm:sublin}}
\begin{proof}
Fix $\tilde{\epsilon} >0$. Corollary \ref{corr:sublinRate} establishes that for all variants and for all $t \geq T(\tilde{\epsilon})$,  it holds that $\min_{\tau=0,\dots,t-1}\vert{L_{\tau}}\vert \leq \tilde{\epsilon}$. Let us denote by $t^*$ an index of such iteration for which it holds that $\vert{L_{t^*}}\vert \leq \tilde{\epsilon}$.

Let us begin with FW steps. By definition we have that
\begin{align*}
\vert{L_{t}}\vert &= \frac{\vert{\v_{t}^{\top}\nabla{}f(\Q_{t})\v_{t}}\vert}{\v_{t}^{\top}\Q_{t}^{-1}\v_{\tau}} =
 \frac{-\v_{t}^{\top}\nabla{}f(\Q_{t})\v_{\tau}}{\v_{t}^{\top}\Q_{t}^{-1}\v_{t}} \geq \frac{-p(1-\beta)\lambda_{\min}(\nabla{}f(\Q_{t}))}{p\lambda^{-1}_{\min}(\Q_{t})} \\
 & =  -(1-\beta)\lambda_{\min}(\Q_t)\lambda_{\min}\left({\Q_t^{-1}\left({\Q_t - \frac{p}{n}\sum_{i=1}^n\frac{\x_i\x_i^{\top}}{\x_i^{\top}\Q_t^{-1}\x_i}}\right)\Q_t^{-1}}\right)\\
 & \geq  -(1-\beta)\lambda_{\min}(\Q_t)\lambda^{-2}_{\max}(\Q_t)\lambda_{\min}\left({\Q_t - \frac{p}{n}\sum_{i=1}^n\frac{\x_i\x_i^{\top}}{\x_i^{\top}\Q_t^{-1}\x_i}}\right). 
\end{align*}

Thus, for $\tilde{\epsilon} = \tilde{\epsilon}_{\textrm{FW}}$ and $t=t^*$ we indeed have that \eqref{eq:sublinRes:1} holds. 

We now turn to prove \eqref{eq:sublinRes:2} holds for AFW and GAFW updates. For AFW updates, using similar arguments as before, it holds that
\begin{align*}
\vert{L_{t}}\vert &= \frac{\vert{\v_{t}^{\top}\nabla{}f(\Q_{t})\v_{t}}\vert}{\v_{t}^{\top}\Q_{t}^{-1}\v_{t}} \geq (1-\beta)\lambda_{\min}(\Q_t)\left\|{\Q_t^{-1}\left({\Q_t - \frac{p}{n}\sum_{i=1}^n\frac{\x_i\x_i^{\top}}{\x_i^{\top}\Q_t^{-1}\x_i}}\right)\Q_t^{-1}}\right\|_2 \\
&\geq (1-\beta)\lambda_{\min}(\Q_t)\lambda^{-2}_{\max}(\Q_t)\left\|{\Q_t - \frac{p}{n}\sum_{i=1}^n\frac{\x_i\x_i^{\top}}{\x_i^{\top}\Q_t^{-1}\x_i}}\right\|_2.
\end{align*}
Thus,  for $\tilde{\epsilon} = \tilde{\epsilon}_{\textrm{AFW}}$ and $t=t^*$ we indeed have that \eqref{eq:sublinRes:2} holds for AFW.  

We turn to prove that \eqref{eq:sublinRes:2}  holds for GAFW updates. According to the update rule we have that,
 \begin{align*}
 \vert{L_{t}}\vert &= \frac{\vert{\v_{t}^{\top}\nabla{}f(\Q_{t})\v_{t}}\vert}{\v_{t}^{\top}\Q_{t}^{-1}\v_{t}} \geq  (1-\beta)\Vert{\Q_t^{1/2}\nabla{}f(\Q_t)\Q_t^{1/2}}\Vert_2 \\
  &=(1-\beta)\left\|{\Q_t^{-1/2}\left({\Q_t - \frac{p}{n}\sum_{i=1}^n\frac{\x_i\x_i^{\top}}{\x_i^{\top}\Q_t^{-1}\x_i}}\right)\Q_t^{-1/2}}\right\|_2 \\
&\geq (1-\beta)\lambda^{-1}_{\max}(\Q_t)\left\|{\Q_t - \frac{p}{n}\sum_{i=1}^n\frac{\x_i\x_i^{\top}}{\x_i^{\top}\Q_t^{-1}\x_i}}\right\|_2.
 \end{align*}
 Thus, for $\tilde{\epsilon} = \tilde{\epsilon}_{\textrm{GAFW}}$ and $t=t^*$ we have that \eqref{eq:sublinRes:2}  holds for GAFW. 
 \end{proof}

\section{Proofs Missing from Section \ref{sec:linear}}

\subsection{Proof of Theorem \ref{thm:rho}}
Before we can prove Theorem \ref{thm:rho} we need to establish several auxiliary results.

\begin{lemma}\label{lem:alphaxi}
Under Assumption \ref{ass:linear}, there exist a constant $\alpha > 0$ such that for any pair of unit vectors $\u, \v$ in $\reals^p$, it holds that
$\frac{1}{n}\sum_{i=1}^n (\u^{\top}\x_i)^2(\v^{\top}\x_i)^2 \geq \alpha$.
\end{lemma}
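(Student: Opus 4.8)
\textbf{Proof proposal for Lemma \ref{lem:alphaxi}.}

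The plan is to reduce the uniform lower bound over all pairs of unit vectors to a compactness argument: the function $g(\u,\v) = \frac{1}{n}\sum_{i=1}^n (\u^{\top}\x_i)^2(\v^{\top}\x_i)^2$ is a polynomial, hence continuous, on the compact set $\mbS^{p-1}\times\mbS^{p-1}$ (the product of unit spheres), so it attains its minimum; it then suffices to show $g(\u,\v) > 0$ for every fixed pair of unit vectors $\u,\v$, and set $\alpha$ equal to that strictly positive minimum. So the whole content is: for any unit vectors $\u,\v$, at least one data-point $\x_i$ satisfies $(\u^{\top}\x_i)^2(\v^{\top}\x_i)^2 > 0$, i.e. $\x_i$ is orthogonal to neither $\u$ nor $\v$.

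To prove this, first I would argue that each of the ``bad'' hyperplanes kills at most $n/2$ points. Fix $\u$; the set $\{i : \u^{\top}\x_i = 0\}$ consists of the indices whose data-points lie in the hyperplane $\u^{\perp}$, a proper subspace of $\reals^p$. If this set had size $\geq n/2$, then by Assumption \ref{ass:linear} the span of those points would be all of $\reals^p$, contradicting the fact that they all lie in the proper subspace $\u^{\perp}$. Hence $\vert\{i : \u^{\top}\x_i = 0\}\vert < n/2$, and likewise $\vert\{i : \v^{\top}\x_i = 0\}\vert < n/2$. Taking the union, the number of indices $i$ for which $\x_i$ is orthogonal to $\u$ \emph{or} to $\v$ is strictly less than $n/2 + n/2 = n$, so there exists at least one index $i_0$ with $\u^{\top}\x_{i_0}\neq 0$ and $\v^{\top}\x_{i_0}\neq 0$. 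For that index, $(\u^{\top}\x_{i_0})^2(\v^{\top}\x_{i_0})^2 > 0$, and since all summands in the definition of $g$ are nonnegative, $g(\u,\v) \geq \frac{1}{n}(\u^{\top}\x_{i_0})^2(\v^{\top}\x_{i_0})^2 > 0$. Combined with continuity of $g$ and compactness of $\mbS^{p-1}\times\mbS^{p-1}$, the minimum $\alpha := \min_{\u,\v\in\mbS^{p-1}} g(\u,\v)$ is attained and strictly positive, which is the claim.

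I do not expect a serious obstacle here; the only subtlety is making sure that ``$\geq n/2$'' versus ``$> n/2$'' is handled correctly so that the two bad sets of size $< n/2$ cannot cover all $n$ indices — this is why Assumption \ref{ass:linear} is phrased with $\vert\mS\vert \geq n/2$ and $n \geq 2p$ (the latter guaranteeing the hypothesis is nonvacuous, though it is not strictly needed for this particular lemma). One should also note in passing that $\u^{\perp}$ being a \emph{proper} subspace uses $p \geq 1$ and $\u \neq 0$, both of which hold. If one wanted to avoid the compactness step and instead exhibit an explicit $\alpha$, that would be the hard part — the value of $\alpha$ depends intricately on the configuration of the data — but since the lemma only asserts existence, the compactness argument suffices and is cleanest.
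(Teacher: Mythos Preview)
Your proposal is correct and matches the paper's proof essentially exactly: both reduce via compactness of $\mbS^{p-1}\times\mbS^{p-1}$ to showing the sum is strictly positive at a minimizing pair $(\u^*,\v^*)$, then invoke Assumption~\ref{ass:linear} to rule out the possibility that every $\x_i$ is orthogonal to $\u^*$ or to $\v^*$. The only cosmetic difference is that the paper phrases the counting by pigeonhole (one of the two orthogonality sets must have size $\geq n/2$), whereas you phrase it as a union bound (each has size $< n/2$), which is logically equivalent.
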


\begin{proof}
Let $\u^*,\v^*$ be the optimal solutions to the optimization problem:
\begin{align*}
\min_{\u,\v: \Vert{\u}\Vert=\Vert{\v}\Vert=1}\frac{1}{n}\sum_{i=1}^n (\u^{\top}\x_i)^2(\v^{\top}\x_i)^2 = \alpha,
\end{align*}

and suppose by way of contradiction that $\alpha =0$.

Thus, for all $i=1,\dots,n$, $\x_i\perp\u^*$ or $\x_i\perp\v^*$, which in particular implies that there exists $\mS\subseteq\{\x_1,\dots,\x_n\}$ such that $\vert{\mS}\vert \geq n/2$ and either $\u^*$ or $\v^*$ are orthogonal to all vectors in $\mS$, which implies orthogonality to $\textrm{span}(\mS)$. However, according to Assumption \ref{ass:linear}, $\textrm{span}(\mS) = \reals^p$, and thus we arrive at a contradiction. 
\end{proof}

\begin{lemma}\label{lem:strongconvineq}
Suppose Assumption \ref{ass:linear} holds. 
Consider the function
\begin{align*}
g(t) := f({\Z}^{\frac{1}{2}}\exp{(t {\W})}{\Z}^{\frac{1}{2}}),
\end{align*}
for some $\Z \succ 0 $, and $\W\in\mbS^p$ such that $\trace(\W) = 0$ and $\Vert{\W}\Vert_1 = 1$. Then,

\begin{align*}
	\forall t: \quad g''(t) \geq \left({\frac{\lambda_{\min}(\Z)}{\lambda_{\max}(\Z)}}\right)^2p^{-1}\exp(-4t)\alpha,
\end{align*}
where $\alpha > 0 $ is the constant implied by Lemma \ref{lem:alphaxi}.
\end{lemma}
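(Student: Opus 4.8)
The plan is to compute $g''(t)$ directly by differentiating twice along the geodesic $\Q(t) := \Z^{1/2}\exp(t\W)\Z^{1/2}$, and then lower-bound each term. First I would record the parametrization $\Q(t)^{-1} = \Z^{-1/2}\exp(-t\W)\Z^{-1/2}$ and split $g(t) = \frac{p}{n}\sum_i \log(\x_i^\top \Q(t)^{-1}\x_i) + \log\det\Q(t)$. The $\log\det$ term is the easy one: $\log\det\Q(t) = \log\det\Z + t\,\trace(\W)$, which is \emph{linear} in $t$ (using $\trace(\W)=0$ it is even constant), so it contributes nothing to $g''$. Hence $g''(t) = \frac{p}{n}\sum_i \frac{d^2}{dt^2}\log(\x_i^\top \Q(t)^{-1}\x_i)$, and I only need to analyze a single scalar function $\phi_i(t) := \log(\x_i^\top \Z^{-1/2}\exp(-t\W)\Z^{-1/2}\x_i)$.

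Next I would set $\y_i := \Z^{-1/2}\x_i$ and write $a_i(t) := \y_i^\top \exp(-t\W)\y_i$. Differentiating, $a_i'(t) = -\y_i^\top \W e^{-t\W}\y_i$ and $a_i''(t) = \y_i^\top \W^2 e^{-t\W}\y_i$. Then $\phi_i'' = \frac{a_i'' a_i - (a_i')^2}{a_i^2}$. The key point is that $e^{-t\W}$ is positive definite, so by a Cauchy--Schwarz argument in the inner product induced by $e^{-t\W}$ (writing $\W e^{-t\W} = e^{-t\W/2}(e^{-t\W/2}\W e^{-t\W/2})e^{-t\W/2}$, noting $\W$ and $e^{-t\W}$ commute, so really $\W e^{-t\W/2}\cdot e^{-t\W/2}$), one gets $(a_i')^2 \le a_i \cdot \y_i^\top \W^2 e^{-t\W}\y_i = a_i a_i''$; i.e.\ each $\phi_i'' \ge 0$ and more precisely $\phi_i'' = \frac{a_i a_i'' - (a_i')^2}{a_i^2}$. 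I then need a genuinely positive lower bound, not just nonnegativity. The cleanest route: use commutativity of $\W$ and $e^{-t\W}$ to write everything in the eigenbasis of $\W$, say $\W = \sum_k \omega_k \p_k\p_k^\top$; then with $c_{ik} := (\p_k^\top \y_i)^2 \ge 0$ we have $a_i = \sum_k c_{ik} e^{-t\omega_k}$, $a_i' = -\sum_k \omega_k c_{ik} e^{-t\omega_k}$, $a_i'' = \sum_k \omega_k^2 c_{ik} e^{-t\omega_k}$, so $\phi_i'' = \mathrm{Var}_{\nu_i}(\omega)$ where $\nu_i$ is the probability distribution on eigenvalues with weights proportional to $c_{ik}e^{-t\omega_k}$. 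A variance is nonnegative and is bounded below by $\tfrac12 (\text{weight}_k)(\text{weight}_\ell)(\omega_k-\omega_\ell)^2$ for any pair $k,\ell$.

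To turn this into the stated bound I would proceed as follows. Since $\Vert\W\Vert_1 = 1$ and $\trace(\W)=0$, there are indices $k,\ell$ with $\omega_k \ge 1/(2p)$ and $\omega_\ell \le -1/(2p)$ (the positive eigenvalues sum to $1/2$, the negative ones to $-1/2$, and there are at most $p$ of each), so $(\omega_k - \omega_\ell)^2 \ge 1/p^2$. Thus $\phi_i''(t) \ge \tfrac12 \frac{c_{ik} c_{i\ell} e^{-t\omega_k}e^{-t\omega_\ell}}{a_i^2}(\omega_k-\omega_\ell)^2$. For the denominator, $a_i = \y_i^\top e^{-t\W}\y_i \le \Vert\y_i\Vert^2 \Vert e^{-t\W}\Vert_2 \le \lambda_{\max}(\Z^{-1})e^{|t|\Vert\W\Vert_2}$, but I want a clean $e^{4t}$-type bound; since $\Vert\W\Vert_2 \le \Vert\W\Vert_1 = 1$ and $\Vert\x_i\Vert=1$, a crude bound $a_i \le \lambda_{\min}(\Z)^{-1}e^{|t|}$ suffices, while $e^{-t\omega_k - t\omega_\ell} \ge e^{-2|t|\Vert\W\Vert_2} \ge e^{-2|t|}$, and $c_{ik}c_{i\ell}$ summed/averaged over $i$ is exactly the quantity $\frac1n\sum_i (\p_k^\top\y_i)^2(\p_\ell^\top\y_i)^2$, which after rescaling $\y_i = \Z^{-1/2}\x_i$ (so $\p_k^\top \y_i = (\Z^{-1/2}\p_k)^\top \x_i$, and $\Vert\Z^{-1/2}\p_k\Vert \ge \lambda_{\max}(\Z)^{-1/2}$) is at least $\lambda_{\max}(\Z)^{-2}\alpha$ by Lemma~\ref{lem:alphaxi} applied to the \emph{normalized} versions of $\Z^{-1/2}\p_k, \Z^{-1/2}\p_\ell$. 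Collecting: $g''(t) = \frac pn \sum_i \phi_i'' \ge p \cdot \tfrac12 \cdot \lambda_{\min}(\Z)^2 e^{-2|t|} \cdot e^{-2|t|} \cdot \frac1{p^2} \cdot \lambda_{\max}(\Z)^{-2}\alpha$, which up to the harmless factor $\tfrac12$ and the constants in the exponent gives the claimed $\big(\lambda_{\min}(\Z)/\lambda_{\max}(\Z)\big)^2 p^{-1} e^{-4|t|}\alpha$; I'd reconcile the constants by being slightly more careful (e.g.\ using $e^{-4t}$ as a uniform lower bound valid for the relevant sign of $t$, or absorbing the $1/2$).

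The main obstacle I anticipate is \textbf{extracting a quantitative positive lower bound on the variance} $\phi_i''$ rather than mere convexity: one must argue that the "mass'' $c_{ik}e^{-t\omega_k}$ and $c_{i\ell}e^{-t\omega_\ell}$ at a well-separated pair of eigenvalues is bounded away from zero \emph{uniformly in $t$, in $i$, and in the vectors $\u,\v$} — and this is precisely where Assumption~\ref{ass:linear} enters, through Lemma~\ref{lem:alphaxi}, after the change of variables $\x_i \mapsto \Z^{-1/2}\x_i$ which is where the $\lambda_{\min}(\Z)/\lambda_{\max}(\Z)$ conditioning factor is paid. A secondary, purely bookkeeping obstacle is matching the exact form of the exponential factor ($e^{-4t}$ vs.\ $e^{-2|t|}$) and the $p^{-1}$ power, which requires care with $\Vert\W\Vert_2 \le \Vert\W\Vert_1$ and with how many eigenvalues can share the mass; none of this is deep but it needs to be done cleanly.
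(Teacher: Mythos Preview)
Your proposal is correct and follows essentially the same route as the paper: diagonalize $\W$, recognize each $\phi_i''$ as a variance $\sum_{j,k}\tfrac{(\lambda_j-\lambda_k)^2}{2}\,C_{ij}C_{ik}/(\sum_l C_{il})^2$, bound the weights above and below using $\Vert\W\Vert_2\le\Vert\W\Vert_1=1$ together with the conditioning of $\Z$, and invoke Lemma~\ref{lem:alphaxi} on the normalized directions $\Z^{-1/2}\u_j/\Vert\Z^{-1/2}\u_j\Vert$. The only bookkeeping difference is that the paper retains the full double sum and closes with the identity $\sum_{j,k}(\lambda_j-\lambda_k)^2=2p\Vert\W\Vert_F^2-2\trace(\W)^2=2p\Vert\W\Vert_F^2\ge 2\Vert\W\Vert_1^2=2$, whereas you extract a single well-separated pair $(\omega_k,\omega_\ell)$ by pigeonhole with $(\omega_k-\omega_\ell)^2\ge p^{-2}$; both routes land on the same $p^{-1}$ factor, and the paper's identity sidesteps the pigeonhole argument (and absorbs your spare~$\tfrac12$).
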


\begin{proof}
Write the eigen-decomposition of $\W$ as $\W = \sum_{j=1}^p\lambda_j\u_j\u_j^{\top}$. Calculations give
\begin{align*}
g(t) &= f(\Z^{\frac{1}{2}} \exp(t \W) \Z^{\frac{1}{2}})  \\
&= \frac{p}{n}\sum_{i=1}^n \log\left({\sum_{j=1}^p \exp(-\lambda_j t) ({\x_i}^{\top} \Z^{-\frac{1}{2}}\u_j)^2}\right) + t \cdot \trace(\W) + \log\det(\Z).
\end{align*}
Thus,
\begin{align*}
g'(t) = -\frac{p}{n}\sum_{i=1}^n \frac{\sum_{j=1}^p \lambda_j\exp(-\lambda_j t) ({\x_i}^{\top} \Z^{-\frac{1}{2}}\u_j)^2}{\sum_{j=1}^p \exp(-\lambda_j t) ({\x_i}^{\top} \Z^{-\frac{1}{2}}\u_j)^2} + \trace(\W),
\end{align*}

which implies that,

\begin{align*}
g''(t) &= \frac{p}{n}\sum_{i=1}^n \frac{\left({\sum_{j=1}^p {\lambda_j}^2\exp(-\lambda_j t) ({\x_i}^{\top} \Z^{-\frac{1}{2}}\u_j)^2}\right) \left({\sum_{j=1}^p \exp(-\lambda_j t) ({\x_i}^{\top} \Z^{-\frac{1}{2}}\u_j)^2 }\right)}{(\sum_{j=1}^p \exp(-\lambda_j t) ({\x_i}^{\top} \Z^{-\frac{1}{2}}\u_j)^2)^2}\\
&\quad - \frac{ \left({\sum_{j=1}^p \lambda_j\exp(-\lambda_j t) ({\x_i}^{\top} \Z^{-\frac{1}{2}}\u_j)^2}\right)^2}{(\sum_{j=1}^p \exp(-\lambda_j t) ({\x_i}^{\top} \Z^{-\frac{1}{2}}\u_j)^2)^2}.
\end{align*}

Let us introduce the notation $C_{ij} = \exp(-\lambda_j t) ({\x_i}^{\top} \Z^{-\frac{1}{2}}\u_j)^2$ for all $i\in\{1,\dots,n\}, j\in\{1,\dots,p\}$. We have that

\begin{align*}
g''(t)&= \frac{p}{n}\sum_{i=1}^n \frac{\left({\sum_{j=1}^p {\lambda_j}^2 C_{ij}}\right) \left({\sum_{j=1}^p C_{ij}}\right) - \left({\sum_{j=1}^p \lambda_j C_{ij}}\right)^2}{(\sum_{j=1}^p C_{ij})^2}\\
&= \frac{p}{n}\sum_{i=1}^n \frac{\sum_{j=1}^p \sum_{k=1}^p  \frac{{\lambda_j}^2 + {\lambda_k}^2}{2} C_{ij}C_{ik}  - \sum_{j=1}^p\sum_{k=1}^p \lambda_j\lambda_k C_{ij} C_{ik}}{(\sum_{j=1}^p C_{ij})^2}\\
&= \frac{p}{n}\sum_{i=1}^n \frac{\sum_{j=1}^p \sum_{k=1}^p  \frac{({\lambda_j} - {\lambda_k})^2}{2} C_{ij}C_{ik}}{(\sum_{j=1}^p C_{ij})^2}.
\end{align*}

Re-arranging we get,
\begin{align*}
g''(t) &=  \sum_{j=1}^p \sum_{k=1}^p({\lambda_j} - {\lambda_k})^2 \frac{p}{n}\sum_{i=1}^n \frac{C_{ij}C_{ik}}{2(\sum_{l=1}^p C_{il})^2}.\end{align*}

Let us fix some $i,j$, and recall that $C_{ij} = \exp(-\lambda_j t) ({\x_i}^{\top} \Z^{-\frac{1}{2}}\u_j)^2$. Since $\Vert{\W}\Vert_2 \leq \Vert{\W}\Vert_1 =1$, we have that 
\begin{align*}
\exp(-t)\lambda^{-1}_{\max}(\Z)\left({\x_i^{\top}\frac{\Z^{-1/2}\u_j}{\Vert{\Z^{-1/2}\u_j}\Vert}}\right)^2 \leq C_{ij} \leq \exp(t)\lambda^{-1}_{\min}(\Z).
\end{align*}

Thus, for any $j,k = 1,\dots,p$ we have that,
\begin{align*}
\frac{p}{n}\sum_{i=1}^n \frac{C_{ij}C_{ik}}{2(\sum_{l=1}^p C_{il})^2} &\geq \frac{p}{n}\sum_{i=1}^n \frac{\lambda^{-2}_{\max}(\Z) \exp(-2t)\left({{\x_i}^{\top} \frac{\Z^{-\frac{1}{2}}\u_j}{\Vert \Z^{-\frac{1}{2}}\u_j\Vert}}\right)^2\left({{\x_i}^{\top} \frac{\Z^{-\frac{1}{2}}\u_k}{\Vert \Z^{-\frac{1}{2}}\u_k\Vert}}\right)^2}{2(p\lambda^{-1}_{\min}(\Z) \exp(t))^2} \\
&\geq \frac{\left({\frac{\lambda_{\min}(\Z)}{\lambda_{\max}(\Z)}}\right)^2 \exp(-4t)}{2p} \frac{1}{n}\sum_{i=1} \left({{\x_i}^{\top} \frac{\Z^{-\frac{1}{2}}\u_j}{\Vert \Z^{-\frac{1}{2}}\u_j\Vert}}\right)^2 \left({{\x_i}^{\top} \frac{\Z^{-\frac{1}{2}}\u_k}{\Vert \Z^{-\frac{1}{2}}\u_k\Vert}}\right)^2\\
&\geq \frac{\left({\frac{\lambda_{\min}(\Z)}{\lambda_{\max}(\Z)}}\right)^2 \exp(-4t)\alpha}{2p},
\end{align*}
where the last inequality is due to Lemma \ref{lem:alphaxi}.

Introducing the notation $\delta(\Z) = \frac{\lambda_{\max}(\Z)}{\lambda_{\min}(\Z)}$, we finally we get,
\begin{align*}
g''(t) &\geq \frac{\exp(-4t)\alpha}{2p\delta^{2}(\Z)} \sum_{j=1}^p \sum_{k=1}^p({\lambda_j} - {\lambda_k})^2= \frac{\exp(-4t)\alpha}{2p \delta(\Z)^2} \sum_{j=1}^p \sum_{k=1}^p({\lambda_j}^2 - 2{\lambda_j}{\lambda_k} + {\lambda_k}^2)\\
&= \frac{ \exp(-4t)\alpha}{2p \delta(\Z)^2} (2p \Vert \W\Vert_F^2 - 2\trace(\W)^2) \underset{(a)}{=} \delta(\Z)^{-2} \exp(-4t)\alpha\vert\vert \W\vert\vert_F^2 \\
&\geq p^{-1}\delta(\Z)^{-2} \exp(-4t)\alpha\vert\vert \W\vert\vert_1^2 \underset{(b)}{=}  p^{-1}\delta(\Z)^{-2} \exp(-4t)\alpha,
\end{align*}
where in (a) we have used the assumption that $\trace(\W) = 0$, and in (b) we have used the assumption that $\Vert{\W}\Vert_1=1$.

\end{proof}


\begin{lemma}\label{lem:gconvineq}
Suppose Assumption \ref{ass:linear} holds.
Let $\Z\in\mbS_{++}$ such that $\det(\Z) = 1$, and let $\Z^*\succ 0 $ be a minimizer of $f(\cdot)$ such that $\det(\Z^*) = 1$. It holds that
\begin{align}
\Vert{\Z^{1/2}\nabla{}f(\Z)\Z^{1/2}}\Vert_2^2 \geq 2\kappa^{-2}(\Z)p^{-1}\exp(-4D(\Z,\Z^*))\alpha \cdot \left({f(\Z) - f(\Z^*)}\right),
\end{align}

where $\kappa(\Z) := \lambda_{\max}(\Z)/\lambda_{\min}(\Z)$, $D(\Z,\Z^*) := \vert\vert \log({\Z}^{-\frac{1}{2}}\Z^* {\Z}^{-\frac{1}{2}})\vert\vert_1$, and $\alpha > 0$ is the constant implied by Lemma \ref{lem:alphaxi}.
\end{lemma}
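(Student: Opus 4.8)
The plan is to transport the one-dimensional strong-convexity estimate of Lemma~\ref{lem:strongconvineq} onto the geodesic joining $\Z$ to $\Z^*$, combine it with the classical fact that a strongly convex function satisfies a Polyak-{\L}ojasiewicz inequality, and finally convert the resulting bound on a directional derivative into a bound on the geodesic gradient via trace/spectral-norm duality.

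First I would dispose of the degenerate case: if $\Z = \Z^*$ then $\nabla{}f(\Z) = 0$ and $f(\Z) - f(\Z^*) = 0$, so the inequality holds trivially. Otherwise set $\W_0 := \log(\Z^{-1/2}\Z^*\Z^{-1/2})$ and $T := \Vert{\W_0}\Vert_1 = D(\Z,\Z^*) > 0$, and let $\W := \W_0/T$. Since $\det(\Z) = \det(\Z^*) = 1$ we get $\trace(\W) = \log(\det(\Z^*)/\det(\Z))/T = 0$, and $\Vert{\W}\Vert_1 = 1$, so $\W$ is admissible in Lemma~\ref{lem:strongconvineq}. Define $g(t) := f(\Z^{1/2}\exp(t\W)\Z^{1/2})$, so that $g(0) = f(\Z)$, and, since $\exp(T\W) = \exp(\W_0) = \Z^{-1/2}\Z^*\Z^{-1/2}$, also $g(T) = f(\Z^*)$. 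Because $\Z^*$ is a global minimizer of $f$ we have $g(t) \geq f(\Z^*) = g(T)$ for every $t$, so $T$ is a global minimum of the smooth function $g$ and hence $g'(T) = 0$. A direct differentiation at $t = 0$ gives $g'(0) = \langle{\nabla{}f(\Z),\, \Z^{1/2}\W\Z^{1/2}}\rangle = \langle{\Z^{1/2}\nabla{}f(\Z)\Z^{1/2},\, \W}\rangle$.

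Next I would invoke Lemma~\ref{lem:strongconvineq} with base point $\Z$ and direction $\W$ to obtain $g''(t) \geq \kappa^{-2}(\Z)\,p^{-1}\exp(-4t)\,\alpha$ for all $t$, and in particular $g''(t) \geq m := \kappa^{-2}(\Z)\,p^{-1}\exp(-4T)\,\alpha > 0$ for every $t \in [0,T]$. Integrating $g'' \geq m$ on $[0,T]$ and using $g'(T) = 0$ yields $-g'(0) \geq mT$ and $g(0) - g(T) \leq -g'(0)T - \frac{m}{2}T^2$; viewing the right-hand side as a function of $T$ on $(0,\, -g'(0)/m]$, where it is nondecreasing, and evaluating at $T = -g'(0)/m$ gives the strongly-convex PL bound $f(\Z) - f(\Z^*) = g(0) - g(T) \leq g'(0)^2/(2m)$. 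Finally, by the duality of the trace and spectral norms together with $\Vert{\W}\Vert_1 = 1$, $g'(0)^2 = \langle{\Z^{1/2}\nabla{}f(\Z)\Z^{1/2},\, \W}\rangle^2 \leq \Vert{\Z^{1/2}\nabla{}f(\Z)\Z^{1/2}}\Vert_2^2$, and substituting this back and unfolding $m$ delivers exactly the claimed inequality.

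There is no deep obstacle here once Lemma~\ref{lem:strongconvineq} is available; the only point demanding care is that the lower bound on $g''$ decays like $\exp(-4t)$, so one must confine attention to the interval $[0,T]$ of the geodesic and use the uniform-on-$[0,T]$ modulus $m = \kappa^{-2}(\Z)\,p^{-1}\exp(-4D(\Z,\Z^*))\,\alpha$. The remainder is the standard one-dimensional passage from strong convexity to a PL inequality, plus the Hölder step and routine bookkeeping of the geodesic identities ($\trace(\W_0) = 0$, $g(T) = f(\Z^*)$, $g'(T) = 0$).
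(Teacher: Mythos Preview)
Your proposal is correct and follows essentially the same route as the paper: define the geodesic $g(t)=f(\Z^{1/2}\exp(t\W)\Z^{1/2})$ with $\W=\log(\Z^{-1/2}\Z^*\Z^{-1/2})/D(\Z,\Z^*)$, invoke Lemma~\ref{lem:strongconvineq} to get a uniform lower bound $g''\ge m$ on $[0,T]$, derive a PL-type inequality $g(0)-g(T)\le g'(0)^2/(2m)$, and finish with the Hölder step $|g'(0)|\le\|\Z^{1/2}\nabla f(\Z)\Z^{1/2}\|_2$. The only cosmetic difference is in the one-dimensional step: the paper bounds $g'(0)$ by Hölder first and then takes the global minimum of the resulting quadratic lower bound, whereas you first use the optimality condition $g'(T)=0$ to locate $T\le -g'(0)/m$ and then exploit monotonicity of the quadratic on that interval; both arrive at the same bound $g'(0)^2/(2m)$.
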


\begin{proof}
Let $\W = \frac{\log(\Z^{-1/2}\Z^*\Z^{-1/2})}{D}$, where $D = D(\Z,\Z^*)=\vert\vert \log({\Z}^{-1/2}\Z^* {\Z}^{-1/2})\vert\vert_1$. 
 Note that
\begin{align*}
\trace\left({\log(\Z^{-1/2}\Z^*\Z^{-1/2})}\right) &= \sum_{i=1}^p\lambda_i\left({\log(\Z^{-1/2}\Z^*\Z^{-1/2})}\right) \\
&=\sum_{i=1}^p\log\left({\lambda_i(\Z^{-1/2}\Z^*\Z^{-1/2})}\right) \\
& = \log\left({\Pi_{i=1}^p\lambda_i(\Z^{-1/2}\Z^*\Z^{-1/2})}\right) \\
&= \log\det(\Z^{-1/2}\Z^*\Z^{-1/2}) \\
& =\log(\det(\Z^{-1/2})\cdot\det(\Z^*)\cdot\det(\Z^{-1/2})) \\
& =\log({\det}^{-1}(\Z)\cdot\det(\Z^*))  = 0,
\end{align*}
where the last equality is due to the fact that $\det(\Z) = \det(\Z^*) = 1$.

Note also that by definition $\Vert{\W}\Vert_1 = 1$.  Denote the function 
\begin{align*}
g(t) = f({\Z}^{\frac{1}{2}}\exp{(t {\W})}{\Z}^{\frac{1}{2}}),
\end{align*}
and note that $g(0) = f(\Z)$, $g(D) = f(\Z^*)$.

Thus, using Lemma \ref{lem:strongconvineq}, and denoting the constant $A = \kappa^{-2}(\Z)p^{-1}\exp(-4D(\Z,\Z^*))\alpha$, we have that, 
\begin{align*}
\forall t\in[0,D]: \quad g''(t) \geq A.
\end{align*}
Integrating on both sides we have that,
\begin{align*}
\forall t\in[0,D]: \quad  g'(t) \geq g'(0) + A{}t.
\end{align*}

Integrating again we have,
\begin{align*}
\forall t\in[0,D]:  \quad g(t) - g(0) \geq t \cdot g'(0) + \frac{A t^2}{2}. 
\end{align*}

It holds that
\begin{align*}
g'(0)&= \langle \nabla f(\Z^{1/2}\exp(t\W)\Z^{1/2}), \Z^{1/2}\W\exp(t\W)\Z^{1/2}  \rangle \vert_{t=0}\\
&= \langle \nabla{f(\Z)},  \Z^{1/2}\W\Z^{1/2} \rangle = \langle \Z^{1/2}\nabla{f(\Z)}\Z^{1/2},  \W \rangle\\
&\geq -\Vert{\Z^{1/2} \nabla f(\Z)\Z^{1/2}}\Vert_2,
\end{align*}
where the last inequality is due to H{\"o}lder's inequality and the fact that $\vert\vert \W\vert\vert_1 = 1$.

Plugging-back we have that,

\begin{align*}
\forall t\in[0,D]:  \quad  g(t) - g(0) \geq -t\Vert{\Z^{1/2} \nabla f(\Z)\Z^{1/2}}\Vert_2 + \frac{A t^2}{2}. 
\end{align*}

The right hand side obtains its minimal value for $t^{*} = \frac{\Vert{\Z^{1/2} \nabla f(\Z)\Z^{1/2}}\Vert_2}{A}$, which yields,
\begin{align*}
\forall t\in[0,D]:  \quad g(0) - g(t) \leq \frac{\Vert{\Z^{1/2} \nabla f(\Z)\Z^{1/2}}\Vert_2^2}{2A},
\end{align*}
and specifically,
\begin{align*}
f(\Z) - f(\Z^*) = g(0) - g(D) \leq \frac{\Vert{\Z^{1/2} \nabla f(\Z)\Z^{1/2}}\Vert_2^2}{2A},
\end{align*}
which concludes the proof.
\end{proof}

We can now prove Theorem \ref{thm:rho}
\begin{proof}[Proof of Theorem \ref{thm:rho}]
Given $\Q_0$ and $\Q$, define their unit-determinant normalizations $\Z_0 := {\det}^{-1/p}(\Q_0)\Q_0$, $\Z := {\det}^{-1/p}(\Q)\Q$. Denote also $\Z^* =  {\det}^{-1/p}(\Q^*)\Q^*$ (recall that since $f(\cdot)$ is invariant to scaling, $\Z^*$ also minimizes $f(\cdot)$). From Lemma \ref{lem:gconvineq} we have that,
\begin{align*}
\Vert{\Z^{1/2}\nabla{}f(\Z)\Z^{1/2}}\Vert_2^2 \geq 2\kappa^{-2}(\Z)p^{-1}\exp(-4D(\Z,\Z^*))\alpha \cdot \left({f(\Z) - f(\Z^*)}\right),
\end{align*}

where $\kappa(\Z) := \lambda_{\max}(\Z)/\lambda_{\min}(\Z)$, $D(\Z,\Z^*) := \vert\vert \log({\Z}^{-\frac{1}{2}}\Z^* {\Z}^{-\frac{1}{2}})\vert\vert_1$, and $\alpha>0$ is the constant implied by Lemma \ref{lem:alphaxi}.

Using the fact that the functions $f(\Z)$, $\kappa(\Z)$ are invariant to scaling of $\Z$, and so is the matrix $\Z^{1/2}\nabla{}f(\Z)\Z^{1/2}$, we have that
\begin{align*}
\Vert{\Q^{1/2}\nabla{}f(\Q)\Q^{1/2}}\Vert_2^2 &\geq  2\kappa^{-2}(\Q)p^{-1}\exp(-4D(\Z,\Z^*))\alpha \cdot \left({f(\Q) - f(\Q^*)}\right).
\end{align*}
Recall that according to Lemma \ref{lem:lowEig}, for every $\Q$ in the level set $\{\Q\in\mS_p~|~f(\Q) \leq f(\Q_0)\}$, $\lambda_{\min}(\Q) > \lambda$, for some $\lambda > 0$. This further implies that  
\begin{align*}
D(\Z,\Z^*) &= \Vert \log({\Z}^{-\frac{1}{2}}\Z^* {\Z}^{-\frac{1}{2}})\Vert_1 
= \Vert\log({\det}^{1/p}(\Q){\det}^{-1/p}(\Q^*){\Q}^{-\frac{1}{2}}\Q^* {\Q}^{-\frac{1}{2}})\Vert_1
\end{align*}
is upper-bounded by some constant on the level-set $\{\M\in\mS_p~|~f(\M)\leq f(\Q_0)\}$, which yields the corollary.
\end{proof}

\subsection{Proof of Theorem \ref{thm:linear}}
\begin{proof}
Let us first consider the case of AFW steps. 
From Lemma $\ref{lem:errorReduce}$ we have that out of $t$ iterations Algorithm \ref{alg:cap} has executed, the maximum number of iterations $\tau$ in which it holds that $L_{\tau}^2 > 1$, is at most $\lceil{4(f(\Q_0) - f(\Q^*))}\rceil = \lceil{4h_0}\rceil$, where we recall that $L_{\tau} := \frac{\v_{\tau}^{\top}\nabla{}f(\Q_{\tau})\v_{\tau}}{\v_{\tau}^{\top}\Q_{\tau}^{-1}\v_{\tau}}$. On any other iteration $\tau$ we have that,
\begin{align*}
f(\Q_{\tau+1}) - f(\Q_{\tau}) &\leq -\frac{1}{4}L_{\tau}^2 \underset{(a)}{\leq} -\frac{(1-\beta)^2p^2\Vert{\nabla{}f(\Q_{\tau})}\Vert_2^2}{4(\v_{\tau}^{\top}\Q_{\tau}^{-1}\v_{\tau})^2}\\
&= -\frac{(1-\beta)^2p^2\Vert{\Q_{\tau}^{-1/2}\Q_{\tau}^{1/2}\nabla{}f(\Q_{\tau})\Q_{\tau}^{1/2}\Q_{\tau}^{-1/2}}\Vert_2^2}{4(\v_{\tau}^{\top}\Q_{\tau}^{-1}\v_{\tau})^2} \\
&\leq -\frac{(1-\beta)^2p^2\lambda^{-2}_{\max}(\Q_{\tau})\Vert{\Q_{\tau}^{1/2}\nabla{}f(\Q_{\tau})\Q_{\tau}^{1/2}}\Vert_2^2}{4p^2\lambda^{-2}_{\min}(\Q_{\tau})} \\
&\underset{(b)}{\leq} -\frac{(1-\beta)^2\kappa_0^{-2}\rho\left({f(\Q_{\tau}) - f(\Q^*)}\right)}{4},
\end{align*}
where (a) follows from the definition of $L_{\tau}$ and the AFW update step (Eq. \eqref{eq:AFWstep}), and (b) follows from Theorem \ref{thm:rho}.

Rearranging, we have that
\begin{align*}
h_{\tau+1} &\leq h_{\tau}\left({1 - \frac{(1-\beta)^2}{4}\kappa_0^{-2}\rho}\right)\leq h_{\tau}\exp\left({- \frac{(1-\beta)^2}{4}\kappa_0^{-2}\rho}\right).
\end{align*}

Thus, we can conclude that,
\begin{align*}
h_{t} \leq h_0\exp\left({- \frac{(1-\beta)^2}{4}\kappa_0^{-2}\rho\left({t - \lceil{4h_0}\rceil}\right)}\right).
\end{align*}

The result for GAFW steps follows from the same reasoning, only now using the GAFW update step (Eq. \eqref{eq:GAFWstep}) which gives the improved bound:
\begin{align*}
-\frac{1}{4}L_{\tau}^2 \leq -\frac{(1-\beta)^2}{4}\Vert{\Q_{\tau}^{1/2}\nabla{}f(\Q_{\tau})\Q_{\tau}^{1/2}}\Vert_2^2.
\end{align*}
\end{proof}

\end{document}